\theoremstyle{plain}
\newtheorem{theorem}{Theorem}[section]
\newtheorem{remark}[theorem]{Remark}
\newtheorem{proposition}[theorem]{Proposition}
\newtheorem{lemma}[theorem]{Lemma}
\theoremstyle{definition}
\newtheorem{definition}[theorem]{Definition}
\newtheorem{notation}[theorem]{Notation}
\newtheorem*{claim*}{Claim}
\newtheorem*{lemma*}{Lemma}
\theoremstyle{definition}
\def\R{\ensuremath{\mathbb R}}
\def\N{\ensuremath{\mathbb N}}
\def\Z{\ensuremath{\mathbb Z}}
\def\T{\ensuremath{\mathbb T}}
\numberwithin{equation}{section}
\begin{document}

\author[S.~Shaabanian]{Saeed Shaabanian}
\thanks{I would like to thank my supervisor, Prof.\ Mike Todd, for their guidance, valuable insights, and support throughout this research.
I kindly thank Prof.\ Nicolai Haydn for useful suggestions to improve the previous version of the paper. }
\address{Mathematical Institute\\
University of St Andrews\\
North Haugh\\
St Andrews\\
KY16 9SS\\
Scotland} 

\email{ss507@st-andrews.ac.uk}

\title{Hitting statistics for $\phi$-mixing dynamical systems}
\keywords{Hitting time statistics, Open systems, Escape rates}
\subjclass[2020]{37A25, 37D25.}
\date{\today}
\maketitle

\begin{abstract}
Hitting rate and escape rate are two examples of recurrence laws for a dynamical system, and a general limit connects them. We show that for both Gibbs-Markov systems or any systems with the $\phi$-mixing measure, for a sequence of nested sets whose intersection is a measure zero set, this general limit equals one in the absence of short returns and less than one otherwise, which is given by an explicit formula called extremal index. One of the applications of this result is to dynamical systems on Riemannian manifolds such as hyperbolic maps and expanding maps, and it can be applied to any system with a suitable Young tower.
\end{abstract}
\section{Introduction}\label{Sec:Int}
An open dynamical system is a system where there is a hole. This means that the points of the system that enter this hole are removed from the system. Recently, there has been a surge of interest in open dynamical systems, leading to significant advancements in our understanding of such systems.

In open dynamical systems, we have two laws of recurrence, the return of a point of the system to the sets that shrink towards that point, which is called the hitting time of that point, and the other is the entry of a point of the system into a fixed set (hole) which is called escape rate.
If we consider the hitting time from the point of view of a random variable, it often has an exponential distribution, see \eqref{a} below. If we consider localized escape rates which one obtains when the size of the hole shrinks to zero around the point, then there is a relationship between these two, see \eqref{Equation: escape rate of L} below. As a result in \cite{BDTodd18}, a general formula has been obtained in which, by choosing the appropriate parameters, we have each of them. This general formula is obtained in the space of open systems with interval maps and conformal measures.

On the other hand, the escape rate and the hitting time have also been obtained if our system has a $\phi$-mixing measure, but in this setting, there is no result of the general formula as in \cite{BDTodd18}, that is our main goal in this research.

The results obtained in this paper show that the general formula has almost the same behaviour in $\phi$-mixing systems with slow mixing or in Gibbs-Markov systems, which are a fast mixing systems, as it does in \cite{BDTodd18}. In fact, if the system has good behaviour and is exponentially $\phi$-mixing, the general formula holds in more cases. However, if the system is polynomially $\phi$-mixing, the general formula applies to fewer cases. The number of cases where the general formula holds depends on the rate of mixing and the rate of decay of the measure of cylinders. 

Note that the general formula obtained here applies to zero measure sets, which is more general in comparison to the setting of \cite{BDTodd18}, where the general formula for singletons is proved. At the end of the paper we provide examples where our theory applies to zero measure sets such as Cantor sets for an interval map (see Section \ref{Section: Cantor set}), and the line segment for the Arnold cat map (see Section \ref{Example: Cat map}).

\subsection{Organization of the paper}
We will first review the definitions and previous results in Section 1, then in Section 2, we present the main theorem (Theorem \ref{mainTH}). In Section 3, we address the general formula for Gibbs-Markov systems. In Section 4, we demonstrate its application which is use of open balls to derive the general formula. Finally, the paper concludes in Section 5 by providing some examples.

\subsection{Hitting times, escape rates, and their relation}\label{Section: Mike Theorem}Suppose $(X,\mathcal{B},T,\mu)$ as an ergodic probability measure preserving \mbox{dynamical} system i.e. $X$ is a space, $\mathcal{B}$ is a $\sigma$-algebra on $X$, $T:X\to X$ is a measurable map and $\mu$ is an ergodic probability measure on $(X,\mathcal{B})$ such that \mbox{$\mu(T^{-1}B)=\mu (B)$} for each $B\in \mathcal{B}$.

\begin{definition}The \textit{hitting time} of $z$ to the set $U$ is the first time that the orbit of $z$ (i.e. $T^nz$ for $n=1,2,3,... $) hits $U$ i.e.
    $$\tau_U (z):=\inf \{j \ge 1 : T^j z \in U\}.$$

\end{definition}

Poincar\'e
 shows for each $z \in U$, $T^nz$ belongs to $U$ infinitely often. After that, Kac in~\cite{Kac47} calculates the mean behaviour of hitting time.
 
Suppose that our system is an open system i.e., a hole is placed in the space and when points hit that hole, they are removed from the dynamics. We denote the holes by $\{U_r\}_r$ that shrink to a point $z$ as $r \to 0$. Let $s\in \mathbb{R}^+$. A natural question that can arise here is, what is $\mu(\tau_{U_r}(z)>s)$?
 
 Hitting time statistics started with Pitskel~\cite{Pitskel91}.  Pitskel shows that
 \begin{equation}
    \left|\mu\left(\mu(U_r) . \tau_{U_r}(z) >s\right) - e^{-s}\right| \to 0 \quad as \quad  r \to 0.\label{a}
\end{equation}

Pitskel's result was generated by Hirata~\cite{Hirata93} for hyperbolic dynamical systems. Then, Collet and Galves~\cite{CGalves95} developed this result in the context of piecewise-expanding maps. It gradually became clear that \eqref{a} holds in a wide range of dynamical systems, see~\cite[Chapter5]{Mike'sbook16}.

\noindent We can rewrite \eqref{a} as $\lim_{r \to 0} \log \mu(\mu(U_r) \tau_{U_r} >s)=-s$. So,
\begin{equation}
    \lim_{r \to 0} -\frac{1}{s} \log \mu \left(\tau_{U_r} > \frac{s}{\mu(U_r)}\right)=1.\label{b}
\end{equation}

\noindent Bruin, Demers and Todd (\cite{BDTodd18}) formulated a generalised limit of \eqref{b}. This limit can be seen as the special limiting case of the expression
    $$\frac{1}{\mu(U_r)} \frac{-1}{t} \log \mu \left(\tau_{U_r} >t\right),$$
by setting $t=\frac{s}{\mu(U_r)}$ . We can see the general behaviour of the system around point $z$, by setting $t=s\mu(U_r)^{-\alpha}$ for some $\alpha, s \in (0,\infty)$, and let,
    $$ L_{\alpha,s} (z):=\lim_{r \to 0} \frac{-1}{s\mu (U_r)^{1-\alpha} } \log(\mu(\tau_{U_r} > s\mu (U_r)^{-\alpha})).$$

So when $\alpha =1$, we have \eqref{b} again, when $\alpha = \infty$,  we can consider it as coinciding with the derivative of the escape rate, where $t \to \infty$ as $r$ is held fixed:
    \begin{equation}\label{Equation: escape rate of L}       
    \lim_{r \to 0} 
 \frac{1}{\mu (U_r)} \lim_{t \to \infty} \frac{-1}{t} \log (\mu (\tau_{U_r} >t)),\end{equation}
 
and while $\alpha=0$ we can consider it as the reversed order of limits, where $r \to 0$ as $t$ is held fixed:
    $$\lim_{t \to \infty}\lim_{r \to 0} \frac{-1}{t \mu (U_r)}\log (\tau_{U_r} >t).$$
The main result in~\cite[Theorem 2.1]{BDTodd18} expresses a limit for $L_{\alpha,s}$. We briefly state it below, but first need some description of the setting.

Let $f: I \to I$ be a piecewise continuous map of the unit interval and assume that $I$ has countably intervals of monotonicity. Also assume that there exists a countable collection of maximal intervals  $Z=\{Z_i\} \subset I$ such that $f$ is continuous and monotone on each $Z_i$ and $\forall  i\ne j$, \mbox{int$(Z_i)\ \cap$ int$(Z_j)=\emptyset $}. Now let $D:=I\setminus \cup_i$ int$(Z_i)$.

\noindent Define a Borel probability measure map $m_{\varphi}$ such that $m_{\varphi}(D)=0 $, which is conformal with respect to a potential $\varphi:I \to \mathbb{R}$ (i.e. $\frac{dm_{\varphi}}{d(m_{\varphi}\circ f)}=e^{\varphi}$). Put $$I_{cont}:=\left\{z\in I:f^k \text{ is continuous at } z \text{ for all } k\in \mathbb{N}\right\},$$ and suppose that $\mu_{\varphi}(I_{cont})=1$, where $\mu_{\varphi}$ is the equilibrium state of $\varphi$ i.e. $\mathbf{P}(\varphi)=h_{\mu_{\varphi}}+\int \varphi d\mu_{\varphi}$ ($\mathbf{P}$ is the Pressure of $\varphi$ ). Let $S_n\varphi(z)= \sum_{i=1}^{n-1}\varphi \circ f^i$, then we have the following result.

\begin{theorem}\label{Theorem:MainBDT}
\cite[Theorem 2.1]{BDTodd18} Let $z\in I_{cont}$ and $(U_r)_r$ be a family of intervals such that $\lim_{r\to 0}diam (U_r)=0$ and $\bigcap_r U_r=\{z\}$. 

Then for any $s\in \mathbb{R}^+, \alpha \in [0,\infty]$, taking $ L_{\alpha,s} (z)$ with respect to $\mu_{\varphi}$, we have:
\begin{equation} 
 L_{\alpha,s} (z) = 
  \begin{cases} 
   1 & \text{if $z$ is not periodic} \\
   1-e^{S_p\varphi(z)} & \text{if  $z$ is $p$-periodic,} 
  \end{cases}\label{d}
\end{equation}

\noindent where $p$-periodic means that the prime period of $z$ is $p$, i.e. the smallest positive integer $p$ such that $f^p(z)=z$.
\end{theorem}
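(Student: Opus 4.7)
The plan is to decouple $L_{\alpha,s}(z)$ into two well-known ingredients of thermodynamic formalism for open systems. The first is the local escape-rate asymptotic
\[
\gamma_r:=\lim_{t\to\infty}\frac{-1}{t}\log\mu_\varphi(\tau_{U_r}>t),\qquad \frac{\gamma_r}{\mu_\varphi(U_r)}\longrightarrow \theta(z),
\]
with extremal index $\theta(z)=1$ for aperiodic $z$ and $\theta(z)=1-e^{S_p\varphi(z)}$ when $z$ has prime period $p$. The second is a quantitative exponential return-time law
\[
-\log\mu_\varphi(\tau_{U_r}>t)=\gamma_r\,t\,(1+o(1))
\]
holding uniformly for $t\ge t_0(r)$, where $t_0(r)\to\infty$ slowly as $r\to 0$. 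Granted both, substituting $t=s\mu_\varphi(U_r)^{-\alpha}$ for $\alpha\in(0,\infty)$ yields
\[
\frac{-\log\mu_\varphi(\tau_{U_r}>s\mu_\varphi(U_r)^{-\alpha})}{s\,\mu_\varphi(U_r)^{1-\alpha}}=\frac{\gamma_r}{\mu_\varphi(U_r)}(1+o(1))\longrightarrow\theta(z),
\]
and the case $\alpha=\infty$ is the definition of $\gamma_r/\mu_\varphi(U_r)$, so it reduces at once to the first ingredient.

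The endpoint $\alpha=0$, where $t=s$ is fixed, must be handled separately. Here I would expand $-\log\mu_\varphi(\tau_{U_r}>s)=\mu_\varphi(\tau_{U_r}\le s)+O(\mu_\varphi(\tau_{U_r}\le s)^2)$ and use the first-moment estimate
\[
\mu_\varphi(\tau_{U_r}\le s)=\theta(z)\,s\,\mu_\varphi(U_r)+o(\mu_\varphi(U_r)),
\]
obtained from a union bound over $j=1,\dots,\lfloor s\rfloor$ together with a single inclusion--exclusion correction for the only non-negligible overlap, $U_r\cap f^{-p}U_r$. By conformality $dm_\varphi/d(m_\varphi\circ f)=e^\varphi$ and absolute continuity of $\mu_\varphi$ with respect to $m_\varphi$, the Jacobian of $f^p$ at a $p$-periodic $z$ is $e^{-S_p\varphi(z)}$, giving
\[
\mu_\varphi(U_r\cap f^{-p}U_r)=e^{S_p\varphi(z)}\mu_\varphi(U_r)(1+o(1)),
\]
which is the source of the correction $1-e^{S_p\varphi(z)}$. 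For aperiodic $z\in I_{cont}$, continuity of $f^n$ at $z$ for every $n$ forces $\mu_\varphi(U_r\cap f^{-n}U_r)=o(\mu_\varphi(U_r))$, so $\theta(z)=1$.

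The hardest part will be the uniformity of the exponential return-time law across every scale $t=s\mu_\varphi(U_r)^{-\alpha}$, particularly for small $\alpha$, where $t$ grows only mildly with $r$ and a naive term-by-term exponential bound is too weak. The natural tool is the open transfer operator $\Lp_{U_r}$: uniform quasi-compactness and a uniform spectral gap (which the Markov structure $\{Z_i\}$ and the assumption $\mu_\varphi(I_{cont})=1$ are designed to provide) yield $\mu_\varphi(\tau_{U_r}>t)=C_r\,e^{-\gamma_r t}(1+O(\lambda^t))$ with $\lambda<1$ independent of $r$ and $C_r\to 1$. Combined with the standard first-visit computation $1-e^{-\gamma_r}=(1-e^{S_p\varphi(z)})\mu_\varphi(U_r)(1+o(1))$, this delivers both inputs simultaneously, and \eqref{d} then follows by the substitution above.
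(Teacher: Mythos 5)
First, note that the paper does not prove Theorem \ref{Theorem:MainBDT}: it is quoted from \cite{BDTodd18}, and the remark following it records that the periodic case needs further hypotheses (monotonicity of $f^p$ near $z$, continuity of the density at $z$, four regularity conditions on $(f,\varphi)$, and a large-image condition on $\{U_r\}_r$). Your outline is essentially the strategy of that cited source --- spectral perturbation of the open transfer operator $\Lp_{U_r}$, the Keller--Liverani asymptotic $1-e^{-\gamma_r}\sim(1-e^{S_p\varphi(z)})\mu_\varphi(U_r)$, and a separate direct computation at $\alpha=0$ --- and not the route the present paper takes for its own analogue, Theorem \ref{mainTH}, which replaces the spectral machinery by the $\phi$-mixing almost-factorisation $\mu(\tau_U>ks)=[\mu(\tau_U>s)+\mathcal{O}^*(\delta^\eta)]^{k-2}$ of Proposition \ref{k} together with Lemmas \ref{Lemm2HY} and \ref{lemma2}. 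Your $\alpha=0$ computation is in the right spirit and matches the induction in the paper's lemma for that case, except that the correction is not ``a single'' overlap: each index $j\le s$ contributes an overlap $f^{-j}U_r\cap f^{-j-p}U_r$ of measure $\approx e^{S_p\varphi(z)}\mu_\varphi(U_r)$, so the total correction is of order $s\,e^{S_p\varphi(z)}\mu_\varphi(U_r)$, and only after dividing by $s\mu_\varphi(U_r)$ and letting $s\to\infty$ does this produce $\theta(z)=1-e^{S_p\varphi(z)}$.

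The genuine gap is the step you yourself flag as hardest and then dispose of in one sentence. Writing $\mu_\varphi(\tau_{U_r}>t)=C_r e^{-\gamma_r t}(1+O(\lambda^t))$ and substituting $t=s\mu_\varphi(U_r)^{-\alpha}$, the quantity to control is
\[
\frac{-\log\mu_\varphi(\tau_{U_r}>t)}{s\,\mu_\varphi(U_r)^{1-\alpha}}
=\frac{\gamma_r}{\mu_\varphi(U_r)}-\frac{\log C_r+\log\bigl(1+O(\lambda^{t})\bigr)}{s\,\mu_\varphi(U_r)^{1-\alpha}}.
\]
For $\alpha>1$ the denominator of the error term blows up and the error is harmless, but for $\alpha\in(0,1)$ you need $\log C_r=o\bigl(\mu_\varphi(U_r)^{1-\alpha}\bigr)$, i.e.\ a quantitative rate for $C_r\to1$ that beats the power $\mu_\varphi(U_r)^{1-\alpha}$ for every such $\alpha$. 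Uniform quasi-compactness of $\Lp_{U_r}$ and a uniform spectral gap give no such rate by themselves; this is precisely where the regularity and large-image conditions you have not invoked enter, and without them the conclusion can fail. So the proposal correctly identifies the architecture of the proof but leaves its load-bearing estimate unproved.
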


\begin{remark}
    In the theorem above, in the case $z$ is periodic, we assume $f^p$ is monotonic and the density is continuous at $z$. In addition, $f$ and $\varphi$ must satisfy four regularity conditions which are mainly related to the existence of an upper bound for $e^{S_p\varphi(z)}$. Also, $\{U_r\}_r$ must satisfy a large image condition. We do not state them for brevity, but all the mentioned conditions can be found in \cite[Page1268]{BDTodd18}.
\end{remark}

\subsection{Escape rates for $\phi$-mixing dynamical systems}A part of Theorem \ref{Theorem:MainBDT} was proven by Davis, Haydn, and Yang in a different setting ~\cite[Theorem A and Corollary C]{HYang20}. Namely, they proved the case where $\alpha=\infty$ of Theorem \ref{Theorem:MainBDT} in a symbolic dynamic as a system with a $\phi$-mixing measure, which we will explain further.

Let $(\Omega, T,\mu)$ be a probability measure preserving dynamical system. Assume that there exists $\mathcal{A}=\{A_1, A_2,...\}$ a measurable partition of $\Omega$. 
\begin{definition}\label{Def:cylinders}
    The partition $\mathcal{A}^n=\bigvee^{n-1}_{i=0} T^{-i}\mathcal{A} $ of $\Omega$ is \textit{$n$-th join} of $\mathcal{A}$. The elements of $\mathcal{A}^n$ called $n$-\textit{cylinders}.
\end{definition} 
Note that $\sigma (\mathcal{A}^n)$ is the $\sigma$ algebra generated by $\mathcal{A}^n$. Some key definitions follow.
\begin{definition}
    
The probability measure $\mu$ is \textit{left $\phi$-mixing} with respect to $\mathcal{A}$ if
\begin{center}\label{phidef}
    $|\mu(A \cap T^{-n-k}B)- \mu (A) \mu(B)| \le \phi(k) \mu (A),$
\end{center}
and is \textit{right $\phi$-mixing} with respect to $\mathcal{A}$ if
    $$|\mu(A \cap T^{-n-k}B)- \mu (A) \mu(B)| \le \phi(k) \mu (B),$$
    and $\psi$-mixing with respect to $\mathcal{A}$ if 
    $$|\mu(A \cap T^{-n-k}B)- \mu (A) \mu(B)| \le \phi(k) \mu(A)\mu (B),$$
    
\noindent for all $A \in \sigma (\mathcal{A}^n)$, $n\in \mathbb{N}$ and $B\in \sigma (\bigcup_j \mathcal{A}^j)$, where $\phi(k)$ is a decreasing function which converges to zero as $k \to \infty$.
\end{definition}

\begin{remark}\label{remark:phi=psi}
    Every $\psi$-mixing measure is left and right $\phi$-mixing by choosing $\psi(k)=\phi(k)$.
\end{remark}

\begin{remark}
    
Whenever we say $(X, T,\mu)$ is a $\phi$-mixing dynamical system, we mean that the measure is (left or/and right) $\phi$-mixing.
\end{remark}

\begin{remark}\label{Remark:Right-Left}
    
Typically, the concept of $\phi$-mixing in sources refers to what is defined as left $\phi$-mixing. Left and right $\phi$-mixing are asymmetric (because the term $\mu(A \cap T^{-n-k}B)$ is fixed in both definitions), but we can derive results for right $\phi$-mixing with slight modifications if we have results for left $\phi$-mixing.
\end{remark}

This property is one of the five strong mixing properties for probability measures (see \cite{Bradley05}). The $\phi$-mixing measure was first defined by Ibragimov in 1962 (see \cite{Ibra62} and \cite{IbraRoz78}). Many features of this property, along with examples, can be found in the book \cite[Chapter1]{Doukh95}. The history of the first study of statistical properties related to the hitting rate in mixing processes can be found in \cite{AbadiGa01}. However, Abadi was the first to study these properties in the space with a $\phi$-mixing measure (\cite{Abadi01}). He later found an upper bound for the hitting time law in $\phi$-mixing systems (\cite{Abadi04}). Five years later, in collaboration with Vergne, they proved \eqref{d} for the case $\alpha=1$ in $\phi$-mixing systems (\cite{AbadiVer09}).

We can define the escape rate to $U\subset\Omega$.
\begin{definition}
    Denote the \emph{escape rate} to $U$  by $ \rho (U)$ and define it as $$ \rho (U):= \lim_{t \to \infty} \frac{1}{t}|\log( \mu (\tau_U > t))|,$$if the limit exists.
\end{definition}
\noindent Now, we can find the localized escape rate. For this let $(U_n)_n$ be \textit{nested sequence} of sets i.e. $\forall n,\  U_{n+1}\subset U_n$ and $\lim_n \mu(U_n)=0$. Note that shrinking the $(U_n)_n$ to the point $z\in \Omega $ is a specific case of these settings. In fact, the result obtained in~\cite{HYang20} is more general.
Then we define the \textit{localized escape rate} for the measure zero set $\Lambda=\cap_n U_n$ as,
\begin{align*}  
 \rho(\Lambda):=\lim_{n \to \infty} \frac{\rho (U_n)}{\mu (U_n)}=\lim_{ n \to \infty} \frac{1}{\mu(U_n)} \lim_{t \to \infty} \frac{1}{t}|\log (\mu (\tau_{U_n} > t))|,
\end{align*}
whenever the limit exists. 

We consider, for each $U_n$, a union of $\kappa_n$-cylinders, based on a non-decreasing sequence of integers $\left\{\kappa_n\right\}_n$. For any set $U_n$, we can approximate it in $\sigma (\mathcal{A}^j)$ using the following definition.
\begin{definition}
For each $n$ and $j\ge 1$, set $C_j(U_n)=\left\{B\in \mathcal{A}^j:B\cap U_n\ne \emptyset\right\}$, then we define $$ U_n^j:= \bigcup_{B\in C_j(U_n)} B,$$ as the \textit{outer $j$-cylinder approximation of $U_n$}.
\end{definition}

That is the union of those cylinders in $\mathcal{A}^j$ that have an intersection with $U_n$.

\begin{remark}
\begin{enumerate}

    \item $\forall j,\  U_{n+1}^j\subset U_{n}^j$ and $U_{n}\subset U_{n}^j$,
    \item If $j\ge \kappa_n,\ U_{n}=U_{n}^j $ .

\end{enumerate}
\end{remark}

The following sequence of cylinders is also suitable for our purpose.

\begin{definition}\label{adapt}
   Consider nested sequence $\left\{U_n\in \mathcal{A}^{\kappa_n}\right\}$ for $n=1,2,...$, if this sequence has the following two properties, we call it \textit{a good neighbourhood system}:
\begin{enumerate}[(N1)]
    \item $\kappa_n\nearrow\infty$ and $\kappa_n\mu(U_n)^c\to 0$ for some $c\in(0,1)$; \label{Def:adapted1}
    \item there exists $C>0$ and $m'>1$ so that $\mu(U_n^j)\le \mu(U_n)+Cj^{-m'}$ for all $j<\kappa_n$.\label{Def:adapted2}
\end{enumerate}
\end{definition}

\begin{definition}
    Let $U\subset \Omega$ be a nonzero measure set. Considering the definition of hitting time, we can also derive its \textit{higher order} as follows: $$ \tau^0_U:=0,$$ $$\tau^1_U:=\tau_U,$$$$\tau^j_U(x):=\tau^{j-1}_U(x)+\tau_U\left(T^{\tau_U^{j-1}}(x)\right).$$
\end{definition}
Now suppose that $(U_n)_n$ is a nested sequence in $\Omega$ and $K\in\N$, then $$\hat\theta_l(K,U_n):=\mu_{U_n}\left(\tau_{U_n}^{l-1}\le K\right),$$
is the conditional probability of having at least $(l-1)$ returns to $U_n$ before time $K$.
Assume that $$\hat\theta_l(K):=\lim_{n\to\infty}\hat\theta_l(K,U_n),$$ exists for sufficiently large $K$ and $l\ge1$. Then the following limit also exists (due to $\hat\theta_l(K)\le \hat\theta_l(K')\le 1$ for $K\le K')$, \begin{equation}\label{Def:theta_l}
    \hat\theta_l:=\lim_{K\to\infty}\hat\theta_l(K).
\end{equation}
\begin{remark}
    $\hat\theta_1=1$, $\hat\theta_2=\lim_{K\to\infty}\lim_{n\to\infty}\mu_{U_n}\left(\tau_{U_n}\le K\right).$
\end{remark}
\begin{definition}
    Let $(U_n)_n$ be a nested sequence. The following limit is called \textit{extremal index},$$\theta_1:=\lim_{K\to\infty}\lim_{n\to\infty}\mu_{U_n}\left(\tau_{U_n}> K\right)=1-\hat\theta_2.$$
\end{definition}
\begin{remark}\label{Remark:Thetha}
    In many sources, such as \cite{FFTodd13} and \cite{Mike'sbook16}, the extremal index is given as $\theta=\lim_{n\to\infty}\frac{\mu(\tau_{U_n>K_n})}{\mu(U_n)}$, where $(K_n)_n\nearrow\infty$ is a sequence of integers. We will show that, for the results obtained in this paper, $\theta=\theta_1$.
\end{remark}

    If $(U_n)_n$ is a sequence in the absence of short returns, one can obtain $\hat\theta_2=0$ and consequently $\theta_1=1$. Using the definition below, this sequence is formulated.
    \begin{definition}
        Let $U\subset\Omega$ be a non zero measure set. Then the \textit{period of $U$} is $$\pi(U)=\min\{k>0: T^{-k}U\cap U\ne \emptyset\},$$ and the \textit{essential period of $U$} is $$\pi_{ess}(U)=\min\{k>0: \mu(T^{-k}U\cap U)>0\}.$$
    \end{definition}
\begin{remark}
    \begin{enumerate}
        \item $\pi(U)\le \pi_{ess}(U);$
        \item $\pi(U)=\pi_{ess}(U)$ if $T$ is continuous, $U$ is open and $\mu$ has full support. 
    \end{enumerate}
\end{remark}

Now, if $\pi_{ess}(U_n)\to \infty$ as $n\to\infty$, it means that the sequence $(U_n)_n$ does not have short returns.

\begin{remark}
    There are equivalent conditions for $\pi_{ess}(U_n)\to \infty$ in \cite[Corollary D]{HYang20}.
\end{remark}
 Davis, Haydn, and Yang proved the following theorem.

\begin{theorem}\label{DHY_TH}
      \cite[Theorem A and Corollary C]{HYang20} Let $(\Omega, T,\mu)$ be a probability measure preserving dynamical system such that $\mu$ is right $\phi$-mixing with $\phi(k)\le Ck^{-m}$ for some $C>0$ and $m>1$, and $(U_n)_n$ is a good neighbourhood system such that in the case $\pi_{ess}(U_n)<\infty$, $(\hat\theta_l)_l$ exists, and satisfies $\sum_l l \hat\theta_l<\infty$. Then $\theta_1$ and the localized escape rate at $\Lambda$ exists and satisfies,
\[  \rho (\Lambda) = 
  \begin{cases} 
   1 & \text{if $\pi_{ess}(U_n)\to \infty$} \\
   \theta_1 & \text{if $\pi_{ess}(U_n)<\infty$.} 
  \end{cases}
\]
\end{theorem}

\begin{remark}\label{Remark:DHY's Theor remarks}
    Theorem \ref{DHY_TH} still holds if:
    \begin{enumerate}
        \item $\mu$ is left $\phi$-mixing instead of right $\phi$-mixing \cite[Remark 2.2]{HYang20};
        \item The system is Gibbs-Markov instead of $\phi$-mixing system \cite[Theorem B]{HYang20}.
    \end{enumerate}
\end{remark}
\begin{remark}
    In Theorem \ref{DHY_TH}, $T$ is considered non-invertible. Although the theorem holds for an invertible map as well, it requires minor adjustments \cite[Remark 2.3]{HYang20}.
\end{remark}

 Comparing Theorem \ref{Theorem:MainBDT} with the Theorem \ref{DHY_TH} gives us the idea that it is possible to show \eqref{d} in the $\phi$-mixing setting as well, but as can be seen, Theorem \ref{Theorem:MainBDT} only considers single points, while here we can handle general zero-measure sets. Therefore, we aim to find $L_{\alpha,s} (\Lambda)$ in this more general setting.

\section{Recurrence rate for an polynomially $\phi$-mixing system}

In this section, we compute $L_{\alpha,s} (\Lambda)$ in different cases of $\alpha\in [0,\infty]$ for a $\phi$-mixing system. First, we present the main theorem, and then we prove it.

\subsection{Main Theorem}

In this section, we present the main theorem of this paper and proceed with its proof.

\begin{theorem}\label{mainTH}

      Let $(\Omega, T,\mu)$ be a $\phi$-mixing dynamical system with a measurable partition $\mathcal{A}$ of $\Omega$ such that $\phi$ decays at least polynomially with a power $m$ i.e. $\phi(k)\le Ck^{-m}$ for some $C>0$ and $m>1$. Assume that $T:\Omega\circlearrowleft$ preserves $\mu$. Suppose $(U_n)_{n\in \N}$ is a good neighbourhood system such that $\bigcap_nU_n=\{\Lambda\}$ where $\Lambda$ is a measure zero set. If $\pi_{ess}(U_n)<\infty$, then assume that $(\hat\theta_l)_l$ exists, and satisfies $\sum_l l \hat\theta_l<\infty$. Let $\alpha$ be a number in $[0,\infty)$ such that for $\alpha\in(0,1)$, we have $\alpha>\max\{\frac{1}{m+1},c\}$, where $c$ as defined in (N\ref{Def:adapted1}). Then for any $s\in\R^+$, $\theta_1$ exists and,
\[  L_{\alpha,s} (\Lambda) = 
   \begin{cases} 
   1 & \text{if $\pi_{ess}(U_n)\to \infty$} \\
   \theta_1 & \text{if $\pi_{ess}(U_n)<\infty$.} 
  \end{cases}
\]

\end{theorem}

\begin{remark}
    If $\pi_{ess}(U_n)\to \infty$, by Lemma \ref{Lemma:DHY_EI=1}, $ L_{\alpha,s} (\Lambda) =1$. So in this case the assumption of existence of $(\hat\theta_l)_l$, and the condition $\sum_l l \hat\theta_l<\infty$ are not required.
\end{remark}

\begin{remark}
    As stated in Section \ref{Sec:Int}, the case $\alpha=\infty$ equals the localized escape rate. This case is proved in~\cite[Theorem A and Corollary C]{HYang20}.
\end{remark}

\begin{remark}
    If $\alpha=0$, Theorem \ref{mainTH} holds. We prove it in Section \ref{Section: alpha=0}. Note that in this case, only the following conditions are required, $(\hat\theta_l)_l$ exist, and $\sum_l  \hat\theta_l<\infty$ where $\pi_{ess}(U_n)<\infty$.
\end{remark}

\begin{remark}
    We obtain this result using right $\phi$-mixing. However, considering Remarks \ref{Remark:Right-Left} and \ref{Remark:DHY's Theor remarks}, with some modifications this result also holds under left $\phi$-mixing.
\end{remark}

\begin{remark}
    The extremal index $\theta_1$ in Theorem \ref{mainTH} can be replaced by $\theta$ as stated in Remark \ref{Remark:Thetha}. In fact, \cite[Proposition 4.11]{HYang20} states that the Lemma \ref{Lemm2HY} can be applied to the extremal index $\theta$. If $\theta$ exists and we want to state Theorem \ref{mainTH} in terms of that, then the existence of $(\hat\theta_l)_l$ and the condition $\sum_l l \hat\theta_l<\infty$ are not required.
\end{remark}

We use the following lemma several times to prove Theorem \ref{mainTH}. Therefore, we state it here before starting the proof. This lemma states that if our neighbourhood does not have short returns, the extremal index $\theta_1$ equals one.
\begin{lemma}\label{Lemma:DHY_EI=1}
    \cite[Lemma 3.1]{HYang20}. Let $(U_n)_n$ be a sequence of nested sets. Assume that $\pi_{ess}(U_n)\to\infty$ as $n\to \infty$, then $\hat\theta_l$ exists and equals zero for all $l\ge 2$.
\end{lemma}

Now in the next sections, we will prove the theorem. 

\subsection{Proof of Theorem \ref{mainTH}: the case $\alpha=0$ } \vspace{1em}\label{Section: alpha=0}

 By replacing $\alpha$ with 0 in 
 \begin{align*}
    L_{\alpha,s} (\Lambda):=\lim_{n \to \infty} \frac{-1}{s\mu (U_n)^{1-\alpha} } \log(\mu(\tau_r > s\mu (U_n)^{-\alpha})),
\end{align*}
we get the following limit:
\begin{align*}
    L_{0,s} (\Lambda)=\lim_{s \to \infty}\lim_{n \to \infty} \frac{1}{s \mu (U_n)}\log \mu(\tau_{U_n} >s).
\end{align*}

So, our goal in this section is to prove the following:
   \begin{equation}
    L_{0,s} (\Lambda)=\lim_{s \to \infty}\lim_{n \to \infty} \frac{-1}{s \mu (U_n)}\log \mu(\tau_{U_n} >s) = 
    \begin{cases} 
   1 & \text{if $\pi_{ess}(U_n)\to \infty$} \\
   \theta_1 & \text{if $\pi_{ess}(U_n)<\infty$.} 
  \end{cases}\label{Eq=L_0,s}
 \end{equation}

The following lemma facilitates the proof of \eqref{Eq=L_0,s}.
\begin{lemma}\label{Lemma:L_0,s}
    \cite[Lemma 3]{HaydnV20}. Let $(U_n)_n$ be a nested sequence so that $\mu(U_n)\to 0$ as $n\to 0$. Assume that the limits $(\hat{\theta_l})_l$ exist, $\sum_{l=1}^\infty \hat{\theta_l}<\infty$ and $s$ large enough. Then $$  \lim_{s \to \infty}\lim_{n \to \infty} \frac{\mu(\tau_{U_n}\le s)}{s \mu (U_n)}=\theta_1.$$   
\end{lemma}

By Lemma \ref{Lemma:L_0,s} for large enough $s$ we have

$$\mu(\tau_{U_n}\le s)=\theta_1s\mu(U_n)(1+o(1)).$$
    Hence, we obtain the result as below
    \begin{align*}
        \lim_{s \to \infty}\lim_{n \to \infty} \frac{-1}{s \mu (U_n)}\log \mu(\tau_{U_n} >s)&= \lim_{s \to \infty}\lim_{n\to\infty}\frac{-1}{s\mu (U_n)}\log(1-\mu(\tau_{U_n}\le s)\\&
        =\lim_{s \to \infty}\lim_{n\to\infty}\frac{1}{s\mu (U_n)}\mu(\tau_{U_n}\le s)\\&
        =\lim_{s \to \infty}\lim_{n\to\infty}\frac{1}{s\mu (U_n)}\theta_1s\mu(U_n)(1+o(1))\\&
        =\theta_1.
    \end{align*}
    Note that $\lim_{n\to\infty} o(1)=0$. Now by Lemma \ref{Lemma:DHY_EI=1}, the goal of this section i.e. the equality \eqref{Eq=L_0,s} is proved.

\subsection{Proof of Theorem \ref{mainTH}: the case $\alpha\in(0,\infty)$ } \label{Section:Proof of general case}
We consider
\begin{align*}
     L_{\alpha,s} (\Lambda)=\lim_{n \to \infty} \frac{-1}{s\mu (U_n)^{1-\alpha} } \log(\mu(\tau_{U_n} > s\mu (U_n)^{-\alpha}))
\end{align*}
for $\alpha\in(0,\infty)$.

For the proof, the following lemma plays a crucial role. 
\begin{lemma}\label{Lemm2HY}
   \cite[Lemma 4.6]{HYang20}. Let $\mu$ be right $\phi$-mixing for the partition $\mathcal{A}$ with $\phi(k)\le C(k^{-m})$ for some $m>1$. Let $(U_n)_n$, $n=1, 2, ...$ be a good neighbourhood such that $\hat\theta_l(K)$ exists for $K$ large enough, and $\sum_l \hat\theta_l<\infty$. Then we have 
    \begin{align*}
        \frac{\mu(\tau_{U_n}\le s_n)}{s_n\mu(U_n)}\to \theta_1,
    \end{align*}
    for any increasing sequence $s_n, n=1,2,...$ which satisfies $s_n\mu(U_n)\to 0$ as $n\to \infty$.
\end{lemma}

We need the following lemma that gives us a useful inequality, see~\cite[Lemma 4.1]{HYang20} for proof. For convenience, we use $U$ instead of $U_n$.

\begin{lemma}
\cite[Lemma 4.1]{HYang20}. Let $\mu$ be right or left $\phi$-mixing for the partition $\mathcal{A}$. Let $s,t>0$ such that $s<t$ and $U\in \sigma (\mathcal{A}^{\kappa_n})$ and $\Delta < \frac{s}{2}$. Then we have, 
$$
    |\mu(\tau_U>t+s)-\mu(\tau_U>t)\mu(\tau_U>s)|\le 2(\Delta\mu(U)+\phi (\Delta-\kappa_n))\mu(\tau_U>t-\Delta).
$$
\label{ineqlemma}
\end{lemma}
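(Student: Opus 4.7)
The plan is to interpose a ``relaxed'' event between the two sides so that $\phi$-mixing can be applied across a clean gap. With $A := \{\tau_U > t\}$, $B := \{\tau_U > s\}$, and $A^- := \{\tau_U > t - \Delta\}$, the identity $\{\tau_U > t + s\} = A \cap T^{-t}B$ recasts the quantity to be estimated as $|\mu(A\cap T^{-t}B) - \mu(A)\mu(B)|$. Because $U \in \sigma(\mathcal{A}^n)$, the event $A^-$ lies in $\sigma(\mathcal{A}^{t - \Delta + n})$, whereas $T^{-t}B$ only uses coordinates from position $t$ onward; so $A^-$ and $T^{-t}B$ are separated by a coordinate gap of $\Delta - n$, and left $\phi$-mixing delivers
\begin{equation*}
    \bigl|\mu(A^- \cap T^{-t}B) - \mu(A^-)\mu(B)\bigr| \le \phi(\Delta - n)\,\mu(A^-).
\end{equation*}

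Next I would quantify the error introduced by enlarging $A$ to $A^-$. The set $A^- \setminus A$ consists of trajectories whose first visit to $U$ lies in the window $(t - \Delta, t]$, so
\begin{equation*}
    A^- \setminus A \subseteq \bigcup_{i=t-\Delta+1}^{t} T^{-i}U,
\end{equation*}
and sub-additivity together with $T$-invariance of $\mu$ gives the crude but sufficient bound $\mu(A^- \setminus A) \le \Delta\,\mu(U)$. The final step is then the three-term algebraic decomposition
\begin{align*}
    \mu(A\cap T^{-t}B) - \mu(A)\mu(B) &= \bigl[\mu(A^- \cap T^{-t}B) - \mu(A^-)\mu(B)\bigr] \\
    &\quad - \mu\bigl((A^- \setminus A)\cap T^{-t}B\bigr) + \mu(A^- \setminus A)\mu(B),
\end{align*}
in which the bracketed term is bounded by the $\phi$-mixing estimate and each of the remaining two terms is bounded by $\mu(A^- \setminus A) \le \Delta\,\mu(U)$ (using $\mu(B) \le 1$). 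Collecting and absorbing into the common factor $\mu(A^-) = \mu(\tau_U > t - \Delta)$ produces the stated bound $2\bigl(\Delta\mu(U) + \phi(\Delta-n)\bigr)\mu(\tau_U > t - \Delta)$.

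The main obstacle is the coordinate bookkeeping that determines the correct argument of $\phi$: because $U$ itself spans $n$ coordinates, the effective mixing gap between $A^-$ and $T^{-t}B$ is $\Delta - n$ rather than $\Delta$, which is why the hypothesis $\Delta > n$ is needed in applications for the mixing bound to be nontrivial. The auxiliary hypothesis $\Delta < s/2$ is technical and ensures that the inserted slack does not collide with the $B$-portion of the trajectory when the inequality is iterated (for example, to compare $\mu(\tau_U > t+s)$ with the product $\mu(\tau_U > t)\mu(\tau_U > s)$ along a geometric sequence of times); the core proof itself reduces to the single application of $\phi$-mixing packaged into the triangle inequality above.
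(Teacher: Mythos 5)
Your overall architecture -- insert a gap of length $\Delta$, apply left $\phi$-mixing across the resulting coordinate separation $\Delta-n$, and pay a price of order $\Delta\mu(U)$ for the discarded window -- is the right idea, and your mixing step itself is correctly set up ($A^-\in\sigma(\mathcal{A}^{t-\Delta+n})$, gap $\Delta-n$ to $T^{-t}B$). But the proof does not deliver the stated inequality. What you actually obtain is
\[
|\mu(\tau_U>t+s)-\mu(\tau_U>t)\mu(\tau_U>s)|\le \phi(\Delta-n)\,\mu(\tau_U>t-\Delta)+2\Delta\mu(U),
\]
and the final ``collecting and absorbing into the common factor $\mu(A^-)$'' is not a legitimate step: since $\mu(\tau_U>t-\Delta)\le 1$, you cannot replace $2\Delta\mu(U)$ by the smaller quantity $2\Delta\mu(U)\,\mu(\tau_U>t-\Delta)$. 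This is not a cosmetic loss. The whole point of the prefactor $\mu(\tau_U>t-\Delta)$ is that the lemma is iterated in Proposition \ref{k}: dividing through by $\mu(\tau_U>t-\Delta)$ turns the error into a uniform multiplicative defect $\delta$ per step, which is what makes $\mu(\tau_U>ks)=[\mu(\tau_U>s)+\mathcal{O}^*(\delta^\eta)]^{k-2}$ possible. An additive error $\Delta\mu(U)$ that does not scale with $\mu(\tau_U>t-\Delta)$ destroys that iteration, because $\mu(\tau_U>t-\Delta)$ is the small quantity in the regime where the proposition is used.

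The fix, which is how the cited proof of \cite[Lemma 4]{HYang20} proceeds, is to carve the $\Delta$-window out of the \emph{future} block rather than the past one: write $\{\tau_U>t+s\}=\{\tau_U>t\}\cap T^{-(t+\Delta)}\{\tau_U>s-\Delta\}\cap\bigcap_{i=t+1}^{t+\Delta}T^{-i}U^c$. For the upper bound, drop the last intersection, apply mixing to $\{\tau_U>t\}\cap T^{-(t+\Delta)}\{\tau_U>s-\Delta\}$, and use $\mu(\tau_U>s-\Delta)\le\mu(\tau_U>s)+\Delta\mu(U)$; every error term then automatically carries the factor $\mu(\tau_U>t)\le\mu(\tau_U>t-\Delta)$. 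For the lower bound, the discarded event is $\{\tau_U>t\}\cap\bigcup_{i=t+1}^{t+\Delta}T^{-i}U$, and -- this is the step your argument is missing -- you estimate it by a \emph{second} application of $\phi$-mixing, pairing $\{\tau_U>t-\Delta\}\in\sigma(\mathcal{A}^{t-\Delta+n})$ against $T^{-(t+1)}\bigl(\bigcup_{i=0}^{\Delta-1}T^{-i}U\bigr)$, which yields the bound $\mu(\tau_U>t-\Delta)\bigl(\Delta\mu(U)+\phi(\Delta-n)\bigr)$ with the prefactor in place. Note also that your argument never uses the hypothesis $\Delta<s/2$; in the correct proof it is needed so that $s-\Delta>0$ and the truncated future event $\{\tau_U>s-\Delta\}$ makes sense, which is a sign that the gap belongs on the $s$-side of the junction.
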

This lemma provides us with an inequality that can be used to derive a more important inequality. In fact, in the proof of the Lemma \ref{k}, this inequality plays a key role. See the proof in \cite[Lemma 4.2]{HYang20}.
Before moving on to the next lemma, for the sake of simplicity, we write 
\begin{equation}
    \delta := 2(\Delta\mu(U)+\phi (\Delta-\kappa_n)).
\end{equation}\label{delta}

A useful observation is that since $\mu(U_n)$ decays either polynomially or exponentially as $n\to\infty$, then for sufficiently large $\Delta$, we have $\delta\le 1$ in both cases.
\begin{lemma}\label{k}
     \cite[Lemma 4.2]{HYang20}. Let $\mu$ be right or left $\phi$-mixing for the partition $\mathcal{A}$. Let $0<\Delta<s<t$. Define $q=\lfloor \frac{s}{\delta}\rfloor$ and $\eta=\frac{q}{q+1}$. Then for every $k\ge 3$ such that $kq\in \Z$ we have
    \begin{equation}\label{rel}
     \mu(\tau_U > ks)\le[\mu(\tau_U > s)+\delta^\eta]^{k-2}.
    \end{equation}
 \end{lemma}
Now we have a relation for $\mu(\tau_U > ks)$ which is obtained from the fact that the measure is $\phi$-mixing. This is the pivotal knowledge we require to accomplish our goal, it means calculating the limit of 
\begin{align*}
    L_{\alpha,s} (\Lambda)=\lim_{n \to \infty} \frac{-1}{s\mu (U_n)^{1-\alpha} } \log(\mu(\tau_r > s\mu (U_n)^{-\alpha}))
\end{align*}

for each $\alpha \in(0,\infty)$ and $s\in\R^+$.

Now we can proceed with the proof of this case. We use the idea used to prove~\cite[Theorem A]{HYang20}. Note that the notation $\mathcal{O}$ in the expression $f=\mathcal{O}(g)$ means that there exists constant $b>0$ such that the inequality $\left|f\right|\le b g$ holds.

In \eqref{rel}, we set $(s_n)_n$ such that this sequence satisfies the conditions of Lemmas \ref{Lemm2HY} since we need this lemma in the proof. Indeed, the left side of $L_{\alpha,s} (z)$ can be obtained by finding a suitable substitution for
$k$ in \eqref{rel}.

So, we put $s_n=\frac{t}{\mu(U_n)^\epsilon}$ for $t\in\R^+$, where $\epsilon\in(0,\min(\alpha,1))$. Also, we replace $k$ by $\frac{1}{\mu(U_n)^{\alpha-\epsilon}}$, then by Lemma \ref{k},
$$  \frac{1}{t\mu (U_n)^{-\alpha} } \log(\mu(\tau_{U_n} > t\mu (U_n)^{-\alpha})) =\frac{\mu (U_n)^{-(\alpha-\epsilon)}-2}{\mu (U_n)^{-(\alpha-\epsilon)}\cdot t\mu (U_n)^{-\epsilon}}\log\left(\mu(\tau_{U_n} > t\mu (U_n)^{-\epsilon})+\mathcal{O}(\delta_n^{\eta_n}))\right).$$
Now, by dividing both sides by $\mu(U_n)$ and taking the limit as $n\to \infty$, we have,
\begin{align*}
    &\lim_{n\to\infty} \frac{1}{t\mu (U_n)^{1-\alpha}} \log(\mu(\tau_{U_n} > t\mu (U_n)^{-\alpha}))\\&=\lim_{n\to\infty} \frac{\mu (U_n)^{-(\alpha-\epsilon)}-2}{\mu (U_n)^{-(\alpha-\epsilon)}\cdot t\mu (U_n)^{-\epsilon}\mu (U_n)}[-\mu(\tau_{U_n} \le t\mu (U_n)^{-\epsilon})+\mathcal{O}(\delta^\eta_n))]\\&
    =\lim_{n\to\infty} \left[\frac{\mu(U_n)^{(\alpha-\epsilon)}}{\mu(U_n)^{(\alpha-\epsilon)}}-2\mu(U_n)^{(\alpha-\epsilon)}\right] \left[\frac{-\mu(\tau_{U_n} \le t\mu (U_n)^{-\epsilon})}{t\mu (U_n)^{-\epsilon}\mu (U_n)}+\frac{\mathcal{O}(\delta^\eta_n)}{t\mu (U_n)^{-\epsilon}\mu (U_n)}\right.
\end{align*}
Thus,
\begin{equation}\label{Equation:terms of L(Lambda)}
    \lim_{n\to\infty} \frac{-1}{t\mu (U_n)^{1-\alpha}} \log(\mu(\tau_{U_n} > t\mu (U_n)^{-\alpha}))
     =\lim_{n\to\infty}\frac{\mu(\tau_{U_n} \le t\mu (U_n)^{-\epsilon})}{t\mu (U_n)^{-\epsilon}\mu (U_n)}-\lim_{n\to\infty}\frac{\mathcal{O}(\delta^\eta_n)}{t\mu (U_n)^{-\epsilon}\mu (U_n)}.
\end{equation}

Now, it suffices to show that the second term on the right-hand side equals 0, then the remaining limit leads us to the desired result with the help of Lemmas \ref{Lemma:DHY_EI=1} and \ref{Lemm2HY}.

Suppose that $b\in(c,1)$, where $c$ is the same as mentioned in (N\ref{Def:adapted1}), such that $\epsilon>b$ and $\Delta_n=\lfloor\mu(U_n)^{-b}\rfloor\gg \kappa_n=o\left(\mu(U_n)^{-c}\right)$. Note that, as we know, $\alpha>\epsilon$ and $b>c$. Since we assume $\alpha>c$, suitable $\epsilon$ and $b$ exist. 

Then we have,
\begin{align*}
    \lim_{n\to\infty}\frac{\mathcal{O}(\delta^{\eta_n}_n)}{t\mu (U_n)^{-\epsilon}\mu (U_n)}&= \lim_{n\to\infty}\frac{\mathcal{O}(\Delta_n^{\eta_n}\mu(U_n)^{\eta_n}+\phi(\Delta_n-\kappa_n)^{\eta_n})}{s_n\mu(U_n)}
    \\&=\lim_{n\to\infty}\frac{\mathcal{O}(\Delta_n^{\eta_n}\mu(U_n)^{\eta_n})}{s_n\mu(U_n)}+\lim_{n\to\infty}\frac{\mathcal{O}(\phi(\Delta_n-\kappa_n)^{\eta_n})}{s_n\mu(U_n)}\\&\lesssim \lim_{n\to\infty}\frac{\Delta_n^{\eta_n}\mu(U_n)^{\epsilon-1+\eta_n}}{t}+\lim_{n\to\infty}\frac{\phi(\Delta_n-\kappa_n)^{\eta_n}\mu(U_n)^{\epsilon-1}}{t}\\&\le\lim_{n\to\infty}\frac{\Delta_n\mu(U_n)^{\epsilon-1+\eta_n}}{t}+\lim_{n\to\infty}\frac{(\Delta_n-\kappa_n)^{-m\eta_n}\mu(U_n)^{\epsilon-1}}{t}\\&\le\lim_{n\to\infty}\frac{\mu(U_n)^{\epsilon-1-b+\eta_n}}{t}+\lim_{n\to\infty}\frac{\mu(U_n)^{bm\eta_n+\epsilon-1}}{t}.
\end{align*}

Now, since $\epsilon>b$, we have $s_n=\frac{t}{\mu(U_n)^\epsilon}\gg\Delta_n=\lfloor\frac{1}{\mu(U_n)^{b}}\rfloor$, then $q_n=\lfloor\frac{s_n}{\Delta_n}\rfloor \to \infty$ as $n\to\infty$. Consequently, $\eta_n=\frac{q_n}{q_{n+1}}\to1$. So, the first term will tend to zero as $n\to\infty$.

To show that the second term tends to zero, the number $bm+\epsilon$ must be strictly greater than one. This condition depends on $\alpha$. If $\alpha>1$, then according to the definition of $\epsilon$, $0<\epsilon<1$ and can be chosen in a way that satisfies $bm+\epsilon>1$ i.e., let $\epsilon=1-\delta'$ and $b=1-2\delta'$ for some $\delta'>0$, then 
$$\epsilon+mb=1-\delta'+m(1-2\delta')=1+m\left(1-2\delta'-\frac{\delta'}{m}\right),$$
hence, $$\epsilon+mb>1 \iff 1-2\delta'-\frac{\delta'}{m}>0 \iff \frac{1}{2+\frac{1}{m}}>\delta'.$$ So in the case $\alpha>1$,  we can choose $\epsilon$ such that $bm+\epsilon>1.$

However, if $\alpha<1$ choosing $\epsilon$ to satisfy the condition may be problematic, as $\alpha$ could be smaller than $1-mb$. Now, we want to determine how small $\alpha$ it can be for condition $bm+\epsilon>1$ to hold. In this case, we have $0<\epsilon<\alpha$. Now suppose that $\epsilon=\alpha-\delta''$ and $b=\alpha-2\delta''$ for some $\delta''>0$. Note that condition $b>c$ holds, if $\alpha-2\delta''=b>c$, i.e., $\delta''<\frac{\alpha-c}{2}$, so a suitable $\delta''$ exists if $\alpha>c$. Now let $$bm+\epsilon=(\alpha-2\delta'')m+\alpha-\delta''=\alpha(m+1)-\delta''(2m+1).$$ Thus, $$bm+\epsilon=\alpha(m+1)-\delta''(2m+1)>1 \iff \delta''<\frac{\alpha(m+1)-1}{2m+1} \iff \alpha>\frac{1}{m+1}.$$ 

Since we assume $\alpha>\frac{1}{m+1}$, the second term also tends to zero, so
$$ L_{\alpha,s} (\Lambda)=\lim_{n\to\infty}\frac{\mu(U_n\le s_n)}{s_n\mu(U_n)}, $$ since $s_n=\frac{t}{\mu(U_n)^\epsilon}$ satisfies in the condition of Lemma \ref{Lemm2HY}, we have $$ L_{\alpha,s} (\Lambda)=\theta_1=1-\hat{\theta_2},$$ if $\pi_{ess}(U_n)<\infty$. In the case $\pi_{ess}(U_n)\to\infty$ by Lemma \ref{Lemma:DHY_EI=1}$,\hat{\theta_2}=0$ so $L_{\alpha,s} (\Lambda)=1$.

So, we have reached the desired conclusion and the proof of Theorem \ref{mainTH} is complete.

\section{recurrence rate for Gibbs-Markov systems}
Our goal in this section is to apply Theorem \ref{mainTH} to Gibbs-Markov systems. Before that, we give some definitions.

Let $T:\Omega\to\Omega$ be a map, where $\Omega$ is a compact metric space, and let $\mu$ be an invariant probability measure on $\Omega$.

\begin{definition}
    We call $T$ a \textit{Markov} map if:
    \begin{enumerate}
        \item There exists a countable measurable partition $\mathcal{A}$ on $\Omega$ such that for all $A\in\mathcal{A}$, $\mu(A)>0$,
        \item $T(A)$ can be written as a union of elements in $\mathcal{A}$,
        \item $T:A\to T(A)$ is bijective for all $A\in\mathcal{A}$.
    \end{enumerate}
\end{definition}
Let $\mathcal{A}^n=\bigvee^{n-1}_{i=0} T^{-i}\mathcal{A}$ be as in Definition \ref{Def:cylinders} and assume $\lambda \in (0,1)$. We define a metric on $\Omega$ that depends on $\lambda$:
$$d_{\lambda}(x,y):=\lambda^{s(x,y)},$$ where $s(x,y)$ is the largest position integer $n$ such that $x$ and $y$ lie in the same $n$-cylinder.

Define the Jacobian $g=\frac{1}{\lvert detJT\rvert}=\frac{d\mu}{d\mu\circ T}$ and $g_k(x)=g(x) g\circ T(x)\cdots g\circ T^{k-1}(x)$. Note that since $T$ is injective, $JT$ shows how the density of the measure changes under $T$.

\begin{definition}\label{Definition:Gibbs-Markov map}
    Let $T$ be a Markov map. We call $T$ as a \textit{Gibbs-Markov} map with respect to $\mu$ if:
    \begin{enumerate}
        \item Big image probability(BIP): There exists $C>0$ such that $\mu(T(A))>C$ for all $A\in\mathcal{A}$,
        \item Distortion: The function $\log g\lvert_A:\Omega\to\R$ is Lipschits for all $A\in\mathcal{A}$ i.e., there exists $C'>0$ such that $d_{\R}\left(\log g\lvert_A(x), \log g\lvert_A(y)\right)\le C' d_{\lambda}(x,y)$.
    \end{enumerate}
\end{definition}
\begin{definition}
    We call $(T,\mu,\mathcal{A})$ a \textit{Gibbs-Markov system} if $T$ is a Gibbs-Markov map with respect to $\mu$.
\end{definition}
\begin{notation}
    Let $A_n(x)\in\mathcal{A}^n$ be the $n$-cylinder that contains the point $x$.
\end{notation}
\begin{remark}
    \begin{enumerate}
        \item Distortion bound property. In view of BIP and distortion condition of Definition \ref{Definition:Gibbs-Markov map}, there exists $D_1>1$ such that for all $x,y$ in the same $n$-cylinder, we have the following distortion bound: $$\left\lvert\frac{g_n(x)}{g_n(y)}-1\right\rvert\le D_1\ d_{\lambda}\left(T^n(x),T^n(y)\right).$$
        \item Gibbs property. For some constant $D_2$, $$D_2^{-1}\le\frac{\mu\left(A_n(x)\right)}{g_n(x)}\le D_2.$$
    \end{enumerate}
\end{remark}
The next lemma shows that Gibbs-Markov systems are exponentially $\phi$-mixing systems. The proof can be found in \cite[Lemma 2.4]{MelbournNicol05}.
\begin{lemma}\label{Lemma: Gibbs-Markov is phi mixing}
    \cite[Lemma 2.4]{MelbournNicol05}. Let $(T,\mu,\mathcal{A})$ be a Gibbs-Markov system with respect to the partition $\mathcal{A}$. Then $\mu$ is a $\phi$-mixing measure such that $\phi$ decays exponentially, i.e. $\phi (k)\le \zeta^k$ for some $\zeta\in(0,1)$.

\end{lemma}

    The main theorem of this section is the following.
\begin{theorem}\label{Theorem:Gibbs-Markov}
     Let $(T,\mu,\mathcal{A})$ be a Gibbs-Markov system. Assume that $(U_n)_n$ is a good neighbourhood system such that $\bigcap_nU_n=\{\Lambda\}$ where $\Lambda$ is a measure zero set. If $\pi_{ess}(U_n)<\infty$, then assume that $(\hat\theta_l)_l$ exists, and satisfies $\sum_l l \hat\theta_l<\infty$. Let $\alpha\in(c,\infty)$, where $c$ as defined in (N\ref{Def:adapted1}). Then for any $s\in\R^+$, $\theta_1$ exists and,
\[  L_{\alpha,s} (\Lambda) = 
   \begin{cases} 
   1 & \text{if $\pi_{ess}(U_n)\to \infty$} \\
   \theta_1 & \text{if $\pi_{ess}(U_n)<\infty$.} 
  \end{cases}
\]
\end{theorem}

\begin{remark}
    Theorem \ref{Theorem:Gibbs-Markov} can also be stated in terms of $\theta$, which is defined in Remark \ref{Remark:Thetha}. Yang proved in \cite[Proposition 5.4]{Yang} that under the conditions of Theorem \ref{Theorem:Gibbs-Markov}, we have $\theta=\theta_1$. If we want to state Theorem \ref{Theorem:Gibbs-Markov} in terms of $\theta$, then the existence of $(\hat\theta_l)_l$ and the condition $\sum_l l \hat\theta_l<\infty$ are not required.
\end{remark}
\begin{remark}
    This result also holds for $\phi$-mixing systems with exponential decay. What can be said is that if the dynamical system mixes sufficiently quickly, then the condition on $\alpha$ becomes simply $c<\alpha$.
\end{remark}
\begin{remark}
    The case $\alpha=\infty$ is proved in \cite[Theorem B]{HYang20}.
\end{remark}
\begin{remark}
    Theorem \ref{Theorem:Gibbs-Markov} holds for $\alpha=0$, the proof is the same as in Section \ref{Section: alpha=0}. Note that in this case, only the following conditions need to exist, $(\hat\theta_l)_l$ exists, and $\sum_l  \hat\theta_l<\infty$ where $\pi_{ess}(U_n)<\infty$.
\end{remark}

The idea of proof of Theorem \ref{Theorem:Gibbs-Markov} is exactly the same as what we used in the proof of Section \ref{Section:Proof of general case}. We need a lemma similar to the Lemma \ref{Lemm2HY} for Gibbs-Markov systems. This lemma was proved by Davis, Haydn and Yang in \cite[Lemma 4.9]{HYang20}.

\begin{lemma}\label{Lemma: equality of theta and fractal in Gibbs-M}
   \cite[Lemma 4.9]{HYang20}. Let $(T,\mu,\mathcal{A})$ be a Gibbs-Markov system. Assume that $(U_n)_n$ is a good neighbourhood system such that $\hat\theta_l(K)$ exists for $K$ large enough, and $\sum_l  \hat\theta_l<\infty$. Then we have
    \begin{align*}
        \frac{\mu(\tau_{U_n}\le s_n)}{s_n\mu(U_n)}\to \theta_1,
    \end{align*}
    for any increasing sequence $(s_n)_n$ for which $s_n\mu(U_n)\to 0$ as $n\to \infty$.
\end{lemma}
\begin{proof}[Proof of Theorem \ref{Theorem:Gibbs-Markov}]

Let $\alpha\in(c,\infty)$. Put $s_n=\frac{t}{\mu(U_n)^\epsilon}$ for $t\in\R^+$ and $\epsilon\in(0,\min(1,\alpha))$. Given Lemma \ref{Lemma: Gibbs-Markov is phi mixing}, we have $\phi (k)\le \zeta^k$ for some $\zeta\in(0,1)$. Therefore, we can deduce relation \eqref{Equation:terms of L(Lambda)} for Gibbs-Markov systems. Thus,
$$ \lim_{n\to\infty} \frac{-1}{t\mu (U_n)^{1-\alpha}} \log(\mu(\tau_{U_n} > t\mu (U_n)^{-\alpha}))
     =\lim_{n\to\infty}\frac{\mu(\tau_{U_n} \le t\mu (U_n)^{-\epsilon})}{t\mu (U_n)^{-\epsilon}\mu (U_n)}-\lim_{n\to\infty}\frac{\mathcal{O}(\delta^\eta_n)}{t\mu (U_n)^{-\epsilon}\mu (U_n)}.$$

Now it is enough to show that second term tends to zero, then by Lemma \ref{Lemma: equality of theta and fractal in Gibbs-M} and Lemma \ref{Lemma:DHY_EI=1}, we obtain the result.

Same as proof of Theorem \ref{Theorem:MainBDT}, suppose numbers $b\in(c,1)$ such that $\epsilon>b$ and $\Delta_n=\lfloor\mu(U_n)^{-b}\rfloor\gg \kappa_n=o\left(\mu(U_n)^{-c}\right)$, where $c$ is the same as in (N\ref{Def:adapted1}). We assume $\alpha>c$, to have allow chose $b$ such that $\epsilon>b>c$. Now, we show  $\lim_{n\to\infty}\frac{\mathcal{O}(\delta^\eta_n)}{t\mu (U_n)^{-\epsilon}\mu (U_n)}=0$ as bellow:
 \begin{align*}
    \lim_{n\to\infty}\frac{\mathcal{O}(\delta^{\eta_n}_n)}{t\mu (U_n)^{-\epsilon}\mu (U_n)}&= \lim_{n\to\infty}\frac{\mathcal{O}(\Delta_n^{\eta_n}\mu(U_n)^{\eta_n}+\phi(\Delta_n-\kappa_n)^{\eta_n})}{s_n\mu(U_n)}
    \\&=\lim_{n\to\infty}\frac{\mathcal{O}(\Delta_n^{\eta_n}\mu(U_n)^{\eta_n})}{s_n\mu(U_n)}+\lim_{n\to\infty}\frac{\mathcal{O}(\phi(\Delta_n-\kappa_n)^{\eta_n})}{s_n\mu(U_n)}\\&\lesssim \lim_{n\to\infty}\frac{\Delta_n^{\eta_n}\mu(U_n)^{\epsilon-1+\eta_n}}{t}+\lim_{n\to\infty}\frac{\phi(\Delta_n-\kappa_n)^{\eta_n}\mu(U_n)^{\epsilon-1}}{t}\\&\le \lim_{n\to\infty}\frac{\Delta_n\mu(U_n)^{\epsilon-1+\eta_n}}{t}+\lim_{n\to\infty}\frac{\zeta^{(\Delta_n-\kappa_n){\eta_n}}\mu(U_n)^{\epsilon-1}}{t}\\& \le \lim_{n\to\infty}\frac{\mu(U_n)^{\epsilon-1-b+\eta_n}}{t}+\lim_{n\to\infty}\frac{\zeta^{\kappa_n{\eta_n}}\mu(U_n)^{\epsilon-1}}{t}.
\end{align*}
Since $\epsilon>b$, $\eta_n\to 1$ as $n\to\infty$ and the first term tends to zero. To show that the second term ternds to zero, as $\zeta^{\kappa_n}\to 0$ and $\mu(U_n)^{\epsilon-1}\to \infty$ as $n\to\infty$, the condition is that the growth of $\zeta^{\kappa_n}$ must be greater than $\mu(U_n)^{\epsilon-1}$. Thus, we have,
$$\zeta^{\kappa_n}> \mu(U_n)^{\epsilon-1} \quad \forall n\iff \frac{\kappa_n}{\log(\mu(U_n))}>\frac{\epsilon-1}{\log\zeta} \quad \forall n.$$
Since $\log(\mu(U_n)$ is a negative number we have,
$$ \frac{\kappa_n}{\mu(U_n)^{-c}}>\frac{\kappa_n}{\log(\mu(U_n))}>\frac{\epsilon-1}{\log\zeta}.$$ Now as $\kappa_n=o\left(\mu(U_n)^{-c}\right)$, we have $\lim_{n\to\infty}\frac{\kappa_n}{\mu(U_n)^{-c}}=0$. Hence, $$\lim_{n\to\infty}\frac{\zeta^{\kappa_n{\eta_n}}\mu(U_n)^{\epsilon-1}}{t}=0 \iff \epsilon<1.$$
Since $\epsilon\in(0,\min(1,\alpha))$, the above condition holds and then $$  \lim_{n\to\infty}\frac{\mathcal{O}(\delta^{\eta_n}_n)}{t\mu (U_n)^{-\epsilon}\mu (U_n)}=0,$$ so $$ L_{\alpha,s} (\Lambda)=\lim_{n\to\infty}\frac{\mu(\tau_{U_n}\le s_n)}{s_n\mu(U_n)}.$$ Since $(s_n)_n$ defined in a way that satisfies the conditions of Lemma \ref{Lemma: equality of theta and fractal in Gibbs-M}, therefore, considering this lemma as well as Lemma \ref{Lemma:DHY_EI=1}, we have 
\[  L_{\alpha,s} (\Lambda) = 
   \begin{cases} 
   1 & \text{if $\pi_{ess}(U_n)\to \infty$} \\
   \theta_1 & \text{if $\pi_{ess}(U_n)<\infty$.} 
  \end{cases}
\]
Therefore, the desired result is obtained, and the proof of the theorem is completed.\end{proof}

\section{recurrence rate and metric balls in a $\phi$-mixing system}

In this section, we give applications of Theorem \ref{mainTH}. Using metric balls, we determine the recurrence rate in a metric space and find the limit $ L_{\alpha,s} (\Lambda)$. Indeed, we explain the concept of exceedance rate and then describe its connection to the recurrence rate.
Davis, Haydn and Yang have previously done in~\cite[Section 5]{HYang20} for the case $\alpha=\infty$(escape rate). This motivated us to derive the general formula in this space as well. So, the idea of proof of Theorem \ref{Theorem:B_r} is similar to the work done by \cite{HYang20}, and we achieve our desired result with modifications to their proof.

Assume that $\Omega$ is a metric space, $T:\Omega\to \Omega$ is a map on $\Omega$, and $\mu$ is the probability measure. Let $g:\Omega\to \R\cup\{\infty\}$ be a continuous function such that the maximal value of $g$ is achieved on $\Lambda$, a measure zero closed set. 

Such a function is called an \textit{observable} in dynamical systems. Now, we consider the following process:
$$X_0:=g,\quad X_1:=g\circ T, \cdots,\   X_k:=g\circ T^k, \ldots.$$ Let $(u_n)_n$ be a non decreasing sequence of real numbers. We consider $u_n$ as a sequence of thresholds and events $\{X_n>u_n\}$ represents an instance where $X_k$ exceeds the threshold $u_n$. Define 
\begin{equation}\label{Def:Open balls}
U_n:=\{X_0>u_n\}
\end{equation} 
as the open sets. Since $U_{n+1}\subset U_n$ and $\lim_{n\to\infty}\mu(U_n)\to0,$ $(U_n)_n$ is a nested sequence of open sets.

Now, if we put $M_n:=\max\{X_k:k=0,\cdots,n-1\}$, one can calculate is $$\zeta(u_n):=\lim_{t\to\infty}\frac{1}{t}|\log\mu(M_t<u_n)|.$$
\begin{definition}
    Let $(u_n)_n$ be a sequence of non decreasing of real numbers. Then $$\zeta(g,(u_n)_n):=\lim_{n\to\infty}\frac{\zeta(u_n)}{\mu(U_n)},$$ is called \textit{exceedance rate} of $g$ along the thresholds $(u_n)_n$, if the limit exists.
\end{definition}

The following introduces the definition of approximation close to the $U_n$.
\begin{definition}
    Let $r_n>0$, then $$U_n^-:=U_n\setminus\overline{B_{r_n}(U_n)}=U_n\setminus\left(\bigcup_{x\in\partial U_n}\overline{B_{r_n}(x)}\right),$$
\end{definition}
is \textit{the inner approximation for $U_n$}. Similarly, \textit{the outer approximation for $U_n$} is defined by $$U_n^+:=B_{r_n}(U_n)=\bigcup_{x\in U_n}B_{r_n}(x).$$
\begin{remark}
    $\overline{U_n^-}\subset U_n$ and $\overline{U_n}\subset U_n^+.$
\end{remark}
Now, we can state a theorem similar to Theorem \ref{mainTH} for metric balls as follows.

\begin{theorem}\label{Theorem:B_r}
    Let $(\Omega,T,\mu)$ be a $\phi$ mixing dynamical system where $T$ is measure preserving and $\mu$ is a right $\phi$-mixing for partition $\mathcal{A}$ of $\Omega$ where $\phi$ decays at least polynomially with power $m$.
    Let $g:\Omega\to \R\cup\{\infty\}$ be a continuous function achieving its maximum on a measure zero set $\Lambda$. Let $(u_n)_n$ be a non decreasing sequence of real numbers with $u_n\nearrow\sup g$, such that open sets $U_n$ defined in \eqref{Def:Open balls} satisfy in $\mu(U_n^+\setminus U_n^-)=o(1)\mu(U_n)$ for a sequence $r_n\to 0$. If $\pi_{ess}(U_n)<\infty$, then assume that $(\hat\theta_l)_l$ defined in \eqref{Def:theta_l} exists, and satisfies $\sum_l l \hat\theta_l<\infty$. Let $\kappa_n$ be the smallest positive integer for which diam $(\mathcal{A}^{\kappa_n}) \le r_n$ and assume:
    \begin{enumerate}
    \item $\kappa_n\mu(U_n)^{c'}\to 0$ for some $c'\in(0,1)$;
    \item $U_n$ has small boundary: there exists $C>0$ and $m'>1$ so that $\mu(\bigcup_{B\in\mathcal{A}^j,B\cap B_{r_n}(\partial U_n)\ne \emptyset}B)\le Cj^{-m'}$ for all $n$ and $j\le \kappa_n$.
\end{enumerate}
   Let $\alpha$ be a number in $[0,\infty)$ such that for $\alpha\in(0,1)$, we have $\alpha>\max\{\frac{1}{m+1},c'\}$. Then for all $s\in\R^+$, $\theta_1$ exists and,
 \[  L_{\alpha,s} (\Lambda) = 
   \begin{cases} 
   1 & \text{if $\pi_{ess}(U_n)\to \infty$} \\
   \theta_1 & \text{if $\pi_{ess}(U_n)<\infty$.} 
  \end{cases}
\]
\end{theorem}
\begin{remark}
    Since $\{M_t<u_n\}=\{\tau_{U_n}>t\},$ then 
    
    \[  \zeta(g,\{u_n\}_n)=\rho(\Lambda)=L_{\infty,s} (\Lambda) = 
   \begin{cases} 
   1 & \text{if $\pi_{ess}(U_n)\to \infty$} \\
   \theta_1 & \text{if $\pi_{ess}(U_n)<\infty$,} 
  \end{cases}
\]that is proved in in~\cite[Theorem E]{HYang20}.
\end{remark}
\begin{remark}\label{Remark: B_r for Gibbs-Markov}
    A similar result holds for Gibbs-Markov systems and $\phi$-mixing dynamical systems with an exponentially rate. It suffices to modify condition $\alpha$ according to what is stated in Theorem \ref{Theorem:Gibbs-Markov}. The proof of this follows the same reasoning as the one given below.
\end{remark}
\begin{remark} \label{remark: thete=theta1 in B_r}
    Similar to the previous results, in Theorem \ref{Theorem:B_r}, $\theta$ stated in Remark \ref{Remark:Thetha} can also be replaced with $\theta_1$. However, in this case, the existence of $(\hat\theta_l)_l$ and the condition $\Sigma_ll\hat{\theta_l}<\infty$ are no longer required, and condition (ii) replaced with the following condition: 
    \begin{center}
        (2)$'$ $\theta$ exists for some sequence $K_n>\kappa_n^2.$
    \end{center}
\end{remark}
The proof idea is as follows: we use cylinder $U_n$ and its inner and outer approximations to find two sets bounded $U_n$ and satisfy the conditions of Definition \ref{adapt} and then those of Theorem \ref{mainTH}, we then apply the result of Theorem \ref{mainTH} to both bounds, which finally establishes the result of Theorem \ref{mainTH} for $U_n$ that lies between these two sets.

To proving Theorem \ref{Theorem:B_r}, we need the following lemma and proposition. We use the next notations to make statements simpler.
\begin{notation}
    \begin{enumerate}
        \item We use $\hat\theta^{U_n}_l$ as the notation for $\hat\theta_l$ with respect to $(U_n)_n$ i.e.
        
        $$\hat\theta_l^{U_n}=\lim_{K\to\infty}\lim_{n\to\infty}\mu_{U_n}\left(\tau^{l-1}_{U_n}\le K\right).$$
        \item We use $\theta^{U_n}_l$ as the notation for $\theta_l$ with respect to $U_n$ i.e.
        
        $$\theta_l^{U_n}=\lim_{K\to\infty}\lim_{n\to\infty}\mu_{U_n}\left(\tau^{l-1}_{U_n}> K\right).$$
    \end{enumerate}
\end{notation}
The next lemma shows that the extremal index in a $\phi$-mixing dynamical system is independent of the choice of the cylinder, which was proved in \cite[Lemma 5.6]{Yang}.
\begin{lemma}\label{Lemma:equal thetas}
    \cite[Lemma 5.6]{Yang}. Let $(U_n)_n$ and $(V_n)_n$ be two sequences of nested sets with $V_n<U_n$ for each $n$. Then $\hat\theta_l^{U_n}=\hat\theta_l^{V_n}$.
\end{lemma}
\begin{remark}
    Due to Lemma \ref{Lemma:equal thetas}, we have $\theta_l^{U_n}=\theta_l^{V_n}$, for $(U_n)_n$ and $(V_n)_n$ as two sequences of nested sets with $V_n<U_n$ for each $n$.
\end{remark}

Consider the following two sets, 
\begin{equation}\label{Def: V_n,W_n}
    V_n=\bigcup_{B\in\mathcal{A}^{\kappa_n}, B\subset U_n}B, \qquad W_n=\bigcup_{B\in\mathcal{A}^{\kappa_n}, B\cap U_n\ne \emptyset}B.
\end{equation}

\begin{remark}\label{Remark:Inequality}
  $V_n\subset U_n\subset W_n$. Then, for $\alpha\in[0,1)$, $\mu^{1-\alpha}(V_n)\le \mu^{1-\alpha}(U_n)\le \mu^{1-\alpha}(W_n)$ and for $\alpha\in(1,\infty)$, $\mu^{1-\alpha}(W_n)\le \mu^{1-\alpha}(U_n)\le \mu^{1-\alpha}(V_n)$.
 \end{remark}

 $V_n$ is the largest union of $\kappa_n$-cylinders contained in $U_n$ and $W_n$ is the smallest union of $\kappa_n$-cylinders that contains $U_n$. So, we have the nested sequences $(V_n)_n$ and $(W_n)_n$.

  The following proposition shows that $(V_n)_n$ and $(W_n)_n$ are good neighbourhood systems which is proved in \cite[Theorem E]{HYang20}.

\begin{proposition}\label{Lemma:adapted_U_n}
    Let $\Omega$ be a compact metric space and $T:\Omega\to \Omega$ a measure preserving map. Suppose that $\mu$ is an invariant probability measure on $\Omega$. Let $\mathcal{A}$ be a partition and the sequences $(V_n)_n$ and $(W_n)_n$ defined as \eqref{Def: V_n,W_n}. Let $\kappa_n$ be the smallest positive integer for which diam $(\mathcal{A}^{\kappa_n}) \le r_n$ and assume:
    \begin{enumerate}
    \item $\kappa_n\mu(U_n)^{c'}\to 0$ for some $c'\in(0,1)$;
    \item $U_n$ has small boundary: there exists $C>0$ and $m'>1$ so that $\mu(\bigcup_{B\in\mathcal{A}^j,B\cap B_{r_n}(\partial U_n)\ne \emptyset}B)\le Cj^{-m'}$ for all $n$ and $j\le \kappa_n$.
\end{enumerate} Then $(V_n)_n$ and $(W_n)_n$ are good neighbourhood systems .
\end{proposition}

Now we can prove our main theorem in this section. As previously mentioned, the proof idea comes from~\cite[Section 5]{HYang20}.

\begin{proof}[Proof of Theorem \ref{Theorem:B_r}]
   
    Note that for $V_n\subset U_n\subset W_n$ and each $\alpha\in [0,\infty)$ we have
    $$\mu (V_n)^{\alpha}\tau_{W_n}\le \mu (U_n)^{\alpha}\tau_{U_n}\le \mu (W_n)^{\alpha}\tau_{V_n}.$$
    Fix $s\in\R^+$, then as random variables, they are stochastically ordered i.e. 
    $$\mu\left(\mu (V_n)^{\alpha}\tau_{W_n} > s\right)\le \mu\left(\mu (U_n)^{\alpha}\tau_{U_n} > s\right) \le \mu\left(\mu (W_n)^{\alpha}\tau_{V_n} > s\right).$$
    Now by Remark \ref{Remark:Inequality} for $\alpha\in[0,1)$ we have,
    \begin{align*}
    &\frac{-1}{s\mu (V_n)^{1-\alpha} } \log\left(\mu(\tau_{V_n} > s\mu (W_n)^{-\alpha})\right)  
    \le\frac{-1}{s\mu (U_n)^{1-\alpha} } \log\left(\mu(\tau_{U_n} > s\mu (U_n)^{-\alpha})\right) \\&
    \le\frac{-1}{s\mu (W_n)^{1-\alpha} } \log\left(\mu(\tau_{W_n} > s\mu (V_n)^{-\alpha})\right),
    \end{align*}

    and similarly for $\alpha\in[1,\infty)$ one can obtain,
 \begin{align*}
    &\frac{-1}{s\mu (W_n)^{1-\alpha} } \log\left(\mu(\tau_{V_n} > s\mu (W_n)^{-\alpha})\right)
    \le\frac{-1}{s\mu (U_n))^{1-\alpha} } \log\left(\mu(\tau_{U_n} > s\mu (U_n)^{-\alpha})\right) \\&
    \le\frac{-1}{s\mu (V_n)^{1-\alpha} } \log\left(\mu(\tau_{W_n} > s\mu (V_n)^{-\alpha})\right).
    \end{align*}

   As $U_n^-\subset V_n$, $W_n\subset U_n^+$, we have $W_n\setminus V_n\subset U_n^+\setminus U_n^-$ so by assumption $\mu(U_n^+\setminus U_n^-)=o(1)\mu(U_n)$, we have $\mu(V_n)=\mu(W_n)(1+o(1))$.
 So, for $\alpha\in[0,\infty)$, $$\mu(V_n)^{\alpha}=\mu(W_n)^{\alpha}(1+o(1))^{\alpha}\quad as \quad n\to \infty,$$
$$\mu(V_n)^{1-\alpha}=\mu(W_n)^{1-\alpha}(1+o(1))^{1-\alpha}\quad as \quad n\to \infty.$$
Thus, we have:
$$
    \lim_{n\to \infty} \frac{-1}{s\mu (V_n)^{1-\alpha} } \log\left(\mu(\tau_{V_n} > s\mu (W_n)^{-\alpha})\right)
    =\lim_{n\to \infty} \frac{-1}{s\mu (V_n)^{1-\alpha} } \log\left(\mu(\tau_{V_n} > s\mu (V_n)^{-\alpha})\right),
$$
and also,
$$
    \lim_{n\to \infty}\frac{-1}{s\mu (W_n)^{1-\alpha} } \log\left(\mu(\tau_{W_n} > s\mu (V_n)^{-\alpha})\right)
    =\lim_{n\to \infty}\frac{-1}{s\mu (W_n)^{1-\alpha} } \log\left(\mu(\tau_{W_n} > s\mu (W_n)^{-\alpha})\right).
$$
Similarly, for both bounds in case $\alpha\in[1,\infty)$, as $n\to \infty$, we can replace $\mu (W_n)^{-\alpha}$ with $\mu (V_n)^{-\alpha}$ and $\mu (V_n)^{1-\alpha}$ with $\mu (W_n)^{1-\alpha}$.
   Thus, in both cases of $\alpha$, we have the following inequality:
     \begin{align}
    &\lim_{n\to\infty}\frac{-1}{s\mu (V_n)^{1-\alpha} } \log\left(\mu(\tau_{V_n} > s\mu (V_n)^{-\alpha})\right)\notag \\&
    \le \lim_{n\to\infty}\frac{-1}{s\mu (U_n)^{1-\alpha} } \log\left(\mu(\tau_{U_n} > s\mu (U_n)^{-\alpha})\right) \tag{3.3}\label{Inequality: boundaries of metric balls} \\&
    \le \lim_{n\to\infty}\frac{-1}{s\mu (W_n)^{1-\alpha} } \log\left(\mu(\tau_{W_n} > s\mu (W_n)^{-\alpha})\right).\notag
    \end{align}
    Since $(V_n)_n$ and $(W_n)_n$ are good neighbourhood systems by Proposition \ref{Lemma:adapted_U_n}, if for the case $\alpha<1$ we have $\alpha>\max\{\frac{1}{m+1},c'\}$, then we can apply Theorem \ref{mainTH} for two bounds in \eqref{Inequality: boundaries of metric balls}, so for all $s\in\R^+$ we have 
\[ \lim_{n\to\infty} \frac{-1}{s\mu (V_n)^{1-\alpha} } \log(\mu(\tau_{V_n} > s\mu (V_n)^{-\alpha})) = \begin{cases} 
   1 & \text{if $\pi_{ess}(U_n)\to \infty$} \\
   \theta_1 & \text{if $\pi_{ess}(U_n)<\infty$,} 
  \end{cases}
\]
  and 

\[ \lim_{n\to\infty} \frac{-1}{s\mu (W_n)^{1-\alpha} } \log(\mu(\tau_{W_n} > s\mu (W_n)^{-\alpha})) = \begin{cases} 
   1 & \text{if $\pi_{ess}(U_n)\to \infty$} \\
   \theta_1 & \text{if $\pi_{ess}(U_n)<\infty$.} 
  \end{cases}
\]

  By Lemma \ref{Lemma:equal thetas}, since $\theta_1^{W_n}=\theta_1^{V_n}$,  then $\theta_1^{U_n}=\theta_1^{W_n}=\theta_1^{V_n}=\theta_1$. Hence,
   \[  L_{\alpha,s} (\Lambda) = \lim_{n\to\infty} \frac{-1}{s\mu (U_n)^{1-\alpha} } \log(\mu(\tau_{U_n} > s\mu (U_n)^{-\alpha}))=
   \begin{cases} 
   1 & \text{if $\pi_{ess}(U_n)\to \infty$} \\
   \theta_1 & \text{if $\pi_{ess}(U_n)<\infty$.} 
  \end{cases}
\]
Therefore, the required result is obtained, and the proof of the Theorem \ref{Theorem:B_r} is completed. \end{proof}

\section{Examples}

In the final section of this paper, we present several examples. The first one examines the case where $\Lambda$ is a singleton. If the point is non-periodic, $ L_{\alpha,s} (\Lambda)$ is equal to one, and if the point is periodic, $ L_{\alpha,s} (\Lambda)$ is less than one. The second example considers $\Lambda$ as the ternary Cantor set, since we have short returns, the recurrence rate is less than one. In the last example, we assume $\Lambda$ is a submanifold of an Anosov diffeomorphism, where the recurrence rate varies depending on its stable and unstable vectors. 
\subsection{$\Lambda$ as a singleton}
Here we consider the case where $\Lambda=\{z\}$, for $z\in\Omega$. As discussed in Section \ref{Section: Mike Theorem},  $ L_{\alpha,s} (z)$ has been derived for interval maps in \cite[Theorem 2.1]{BDTodd18}. Also $ L_{\infty,s}(z)$ was found for $\phi$-mixing systems in \cite[Theorem 1]{HaydnY20}. However here we aim to show that a singleton is a special case of our results.

In such a setting, a sequence of sets is considered to shrink to $z$. The important aspect of these settings is the extremal index. In the \cite{HaydnY20}, the extremal index for a periodic point $z$ is defined as follows:
$$\theta=\theta(z)=\lim_{n\to\infty}\frac{\mu(U_n\cap T^{-p}U_n)}{\mu(U_n)},$$ where $p$ is the period of $z$. Note that for non-periodic point $z$ we have $\theta=0$. Also in this case we have $\pi_{ess}(U_n)\to \infty$ by following lemma.
\begin{lemma}
    \cite[Lemma 1]{HaydnY20}. Let $\mathcal{A}$ be a partiotion of $\Omega$ and $(U_n)_n$ a decreasing sequence shrinking to $z$. Then $(\tau(U_n))_n$ is bounded if and only if $z$ is a periodic point.
\end{lemma}

On the other hand, for non-periodic points, we have $\pi_{ess}(U_n)<\infty$ and \cite[Section 8.3]{HaydnV20} shows that $\Sigma_ll\hat{\theta_l}<\infty$ and $\theta=\theta_1=1$. Hence we have Theorem \ref{mainTH} for case $\Lambda=\{z\}$:

\begin{theorem}
      Let $(\Omega, T,\mu)$ be a $\phi$-mixing dynamical system with a measurable partition $\mathcal{A}$ of $\Omega$ such that $\phi$ decays at least polynomially with a power $m>1$. Assume that $T:\Omega\circlearrowleft$ preserves $\mu$. Let $z\in\Omega$ and suppose $(U_n)_n$ is a good neighbourhood system which shrinks to $z$ such that $\bigcap_nU_n=\{z\}$. Let $\alpha \in [0,\infty)$ such that for $\alpha\in(0,1)$, we have $\alpha>\max\{\frac{1}{m+1},c\}$, where $c$ as defined in (N\ref{Def:adapted1}). Then for any $s\in\R^+$, $\theta=\lim_{n\to\infty}\frac{\mu(U_n\cap T^{-p}U_n)}{\mu(U_n)}$ exists and,
\[  L_{\alpha,s} (z) = 
   \begin{cases} 
   1 & \text{if $z$ is non-periodic,} \\
   1-\theta & \text{if $z$ is periodic.} 
  \end{cases}
\]

\end{theorem}

    And also for Gibbs-Markov systems we have:

\begin{theorem}
      Let $(\Omega, T,\mu)$ be a Gibbs-Markov system. Let $z\in\Omega$ and suppose $(U_n)_n$ is a good neighbourhood system which shrinks to $z$ such that $\bigcap_nU_n=\{z\}$. Let $\alpha \in [c,\infty)$, where $c$ as defined in (N\ref{Def:adapted1}). Then for any $s\in\R^+$, $\theta=\lim_{n\to\infty}\frac{\mu(U_n\cap T^{-p}U_n)}{\mu(U_n)}$ exists and,
\[  L_{\alpha,s} (z) = 
   \begin{cases} 
   1 & \text{if $z$ is non-periodic,} \\
   1-\theta & \text{if $z$ is periodic.} 
  \end{cases}
\]
\end{theorem}

Now, we want to determine the recurrence rate in a metric space using metric balls and find the limit $ L_{\alpha,s} (z)$. Suppose that $U_n=B_r(z)$ and $g(y)=h(d(y,z))$ is the observable for some function $h(x):\R\to\R\cup\{\infty\}$ achieving its maximum at 0 (like $h(y)=-\log y$) then $g$ is a continuous function achieving its maximum at $z$. Let $(u_n)_n$ be an increasing sequence of thresholds tending to infinity. So, $U_n=\{y:g(y)>u_n\}$ is a sequence of balls with diameter shrinking to zero. Now, we can rewrite Theorem \ref{Theorem:B_r} for the case $\Lambda=\{z\}$:
\begin{theorem}
      Let $(\Omega,T,\mu)$ be a $\phi$-mixing dynamical system where $T$ is measure preserving and $\mu$ is a right $\phi$-mixing for partition $\mathcal{A}$ of $\Omega$ where $\phi$ decays at least polynomially with power $m>1$.
    Let $g:\Omega\to \R\cup\{\infty\}$ where $g(y)=h(d(y,z))$. Assume that $0<r_n<r$ satisfies $\mu(B_{r+r_n}(z))\setminus \mu(B_{r-r_n}(z))=o(1)\mu(B_r(z))$. Let $\kappa_n$ be the smallest positive integer for which diam $(\mathcal{A}^{\kappa_n}) \le r_n$ and assume:
    \begin{enumerate}
    \item $\kappa_n\mu(U_n)^{c'}\to 0$ for some $c'\in(0,1)$;
    \item $U_n$ has small boundary: there exists $C>0$ and $m'>1$ so that $\mu(\bigcup_{B\in\mathcal{A}^j,B\cap B_{r_n}(\partial U_n)\ne \emptyset}B)\le Cj^{-m'}$ for all $n$ and $j\le \kappa_n$.
    \item for the case $z$ is periodic, $\theta=\lim_{n\to\infty}\frac{\mu(U_n\cap T^{-p}U_n)}{\mu(U_n)}$ exists.
\end{enumerate}
   Let $\alpha$ be a number in $[0,\infty)$ such that for $\alpha\in(0,1)$, we have $\alpha>\max\{\frac{1}{m+1},c'\}$. Then for all $s\in\R^+$,
\[  L_{\alpha,s} (z) = 
   \begin{cases} 
   1 & \text{if $z$ is non-periodic,} \\
   1-\theta & \text{if $z$ is periodic.} 
  \end{cases}
\]
\end{theorem}

\subsection{$\Lambda$ as a Cantor set}\label{Section: Cantor set}

Now we assume $\Lambda$ is a Cantor set. Here, for simplicity, we only work with the ternary Cantor set. 

Consider $\Omega=[0,1]$ and $T:\Omega\circlearrowleft$ defined by $T(x)=3x$ mod 1. Let $\mu$ be the Lebesgue measure on $[0,1]$. Then, $\mu$ is a probability measure and $T$ is $\mu$ preserving map. Assume $\mathcal{A}=\{[0,\frac{1}{3}),[\frac{1}{3},\frac{2}{3}),[\frac{2}{3},1]\}$, is our partition on $\Omega$. Lebesgue measure with respect to $\mathcal{A}$ is $\psi$-mixing, so by Remark \ref{remark:phi=psi}, Lebesgue measure is $\phi$-mixing that also decays exponentially. 

Now assume the decreasing sequence $(U_n)_n$ defined by $U_0=[0,1]$, and $U_{n+1}$ is obtained by removing the middle third of each connected component of $U_n$. Then we have $\bigcap_nU_n=\{\Lambda\}$. Note that $\mu(U_n)=(\frac{2}{3})^n$ and the outer j-cylinder approximation of $U_n$ is $U_j$ i.e. $U_n^j=U_j$, for $j\ge1$. Now we want to check that $(U_n)_n$ is a good neighbourhood system. We have,

\begin{enumerate}[(C1)]
    \item obviously $U_n\in\mathcal{A}^n$ so $\kappa_n=n$, and since $\mu(U_n)=(\frac{2}{3})^n$, by choosing any $c\in(0,1)$ we have $\kappa_n\mu(U_n)^c=n(\frac{2}{3})^{cn}\to 0$. So $(U_n)_n$ satisfies in (N\ref{Def:adapted1}), \label{cantor check N1}
    \item as we have $$\mu(U_n^j)\le\mu(U_n)+\mu(U_j)\le \mu(U_n)+4(j)^{-2}, \quad \forall j<n,$$ so $(U_n)_n$ satisfies in (N\ref{Def:adapted2}).
\end{enumerate}
Hence $(U_n)_n$ is a good neighbourhood system. Now we need to compute the extremal index. This has done as part of proof of \cite[Theorem 8.2]{HYang20}, and we simply state it in the following lemma.
\begin{lemma}
    \cite[Theorem 8.2]{HYang20}. Assume the $\psi$-mixing dynamical system $([0,1],T,\mu)$ where $T(x)=3x$ mod 1 and $\mu$ is the Lebesgue measure. Suppose $\Lambda$ is the Cantor ternary set and $(U_n)_n$ is the nested sequence defined above. Then $\theta_1=\frac{1}{3}$. 
\end{lemma}

 Now that all conditions of Theorem \ref{Theorem:Gibbs-Markov} holds for the Cantor set, we can restate it as follows:
 \begin{theorem}\label{Theorem: Cantor set}
     Let $\alpha\in(c,\infty)$ where $c$ is defined in (C\ref{cantor check N1}). For the uniformly expoanding map $T(x)=3x$ mod 1 on $[0,1]$, the Cantor ternary set $\Lambda$, the nested sets $(U_n)_n$, and any $s\in\R^+$, we have,
     $$L_{\alpha,s} (\Lambda)=\frac{1}{3}.$$
 \end{theorem}
\begin{remark}
    Theorem \ref{Theorem: Cantor set} can be stated more generally for the Cantor sets discussed in \cite{FFRSoares} with some modifications.
\end{remark}

\subsection{$\Lambda$ as a submanifold of Anosov maps}\label{Example: Cat map}
In this section, we aim to obtain the results for a submanifold of an Anosov map. An Anosov map is an example of uniformly hyperbolic dynamical system. It is a diffeomorphism of a compact manifold where the entire tangent space can be split into two invariance subspaces, one that is uniformly contracted and one is uniformly expanded under iteration of the map. The Arnold cat map on the torus, which is discussed in detail in \cite[Chapter 6.4]{HassKatok95}, is one of the classical and important examples of Anosov diffeomorphism.

Let $\Omega=\T^2=\frac{\R^2}{\Z^2}$ and 
\[ T=
\begin{bmatrix}
2 & 1 \\
1 & 1
\end{bmatrix}
\]
induces the cat map on the torus. The Cat map is uniformly hyperbolic, so admits a finite Markov partition $\mathcal{A}$. Suppose two-dimensional Lebesgue measure $\mu$ on $\T^2$, that is exponentially $\psi$-mixing dynamical system with respect to its partition (see \cite[Section 2.1]{CHNicol}). Hence $(\Omega, T, \mu)$ is an exponentially $\psi$-mixing dynamical system. Let $\Lambda\subset\T$ be a line segment with finite length $l(\Lambda)$ and the direction vector $\hat\Lambda$, which describes the direction of the displacement of the points after applying the map. We need an observable here, same as usual, achieve its maximum ($+\infty)$ on $\Lambda$.
Let $g(x)=-\log(d(x,\Lambda))$ be an observable from $\T^2$ to $\R$.
 
 Note that the Jacobian matrix of $T$, $DT$, has two unit eigenvectors $\nu^+$ and $\nu^-$ corresponding to the eigenvalues $\lambda_+=\lambda>1$ and $\lambda_-=\frac{1}{\lambda}<1$. If $\Lambda$ is aligend with the unstable direction, we lift $\Lambda$ to $\hat\Lambda$ on a fundamental domain of the cover $\R^2$ of $\T^2$ and write $\hat\Lambda=\hat{p_1}+t_1\nu^+$, $t_1\in [0,l(\Lambda]$, $\hat{p_1}\in\R^2$. Thus, $\Lambda=\pi(\hat{p_1}+t_1\nu^+)$, where $\pi:\R^2\to \T^2$ is the projection. We write the endpoint of $\hat\Lambda$ as $\hat p_2$, i.e. $\hat p_2=\hat{p_1}+l(\Lambda)\nu^+$. Similarly if $\Lambda$ is aligend with the stable direction, we lift $\Lambda$ to $\hat\Lambda$ on a fundamental domain of the cover $\R^2$ of $\T^2$ and write $\Lambda=\pi(\hat{p_1}+t_1\nu^-)$, and the endpoint of $\hat\Lambda$ as $\hat p_2=\hat{p_1}+l(\Lambda)\nu^-$.

Assume that $(u_n)_n$ is a non decreasing sequence of real numbers. Let $U_n=\{x:g(x)>u_n\}$. If we consider $\delta_n=e^{-u_n}$, then we have $$B_{\delta_n}(y)=\{x: d(x,y)<e^{-u_n}\}=\{x:\log d(x,y)>u_n\}=U_n.$$ The approximation of $U_n$ for $r_n=\delta_n^2=e^{-2u_n}\to 0$ as $n\to \infty$ are $U_n^-=U_n\setminus\overline{\{x: d(x,y)<e^{-u_n}\}}$ and $U_n^+=\bigcup_{y\in U_n} \{x: d(x,y)<e^{-u_n}\}$. Since $\mu$ is a Lebesgue measure, so it is straightforward to verify $$\mu(U_n^+\setminus U_n^-)=o(1)\mu(U_n).$$
Due to hyperbolicity of $T$, there exists $C>0$ such that diam $(\mathcal{A}^{n})<C\lambda^{-n}$. So, if we consider $\kappa_n=\lfloor\frac{\ln C+2 u_n}{\ln \lambda}\rfloor+1=\mathcal{O}(\log n)$, then we have
\begin{align*}
    diam (\mathcal{A}^{\kappa_n})\le diam (\mathcal{A}^{\frac{\ln C+2 u_n}{\ln \lambda}+1})&< C\lambda^{-\left(\frac{\ln C+2 u_n}{\ln \lambda}\right)-1}\\&=C \lambda^{\frac{\ln C^{-1}}{\ln \lambda}}\lambda^{\frac{-2u_n}{\ln \lambda}}\lambda^{-1}\\&=C\frac{1}{C}e^{-2u_n}\lambda^{-1}<e^{-2u_n}=r_n.
\end{align*}

Hence diam $(\mathcal{A}^{\kappa_n})<r_n$. As $\mu(U_n)=\mu(B_{\delta_n}(\Lambda))$, we have $$\mu(U_n)\lesssim e^{-u_n}l(\Lambda)=\mathcal{O}(\frac{1}{n}).$$

Since $ \mathcal{O}(\log n)\gg \mathcal{O}(\frac{1}{n})$ as $n\to \infty$, we have $\kappa_n\mu(U_n)^{c'}\to 0$ for any $c'\in(0,1).$

Now that the conditions of Theorem \ref{Theorem:B_r} are satisfied, according to Remark \ref{Remark: B_r for Gibbs-Markov}, we can state the following theorem.
\begin{theorem}
    Let $(\T^2,T,\mu)$ be the cat map and assume $\alpha\in[0,\infty)$. Then for all $s\in\R^+$,

    \begin{enumerate}
        \item if $\Lambda$ is not aligend with the stable direction or the unstable direction then $L_{\alpha,s} (\Lambda)=1$,
        \item if $\Lambda$ is aligend with the unstable direction but $\{\hat{p_1}+t_1\nu^+, t_1\in\R\}$ has no periodic points, then $L_{\alpha,s} (\Lambda)=1$,
        \item if $\Lambda$ is aligend with the stable direction but $\{\hat{p_1}+t_1\nu^-, t_1\in\R\}$ has no periodic points, then $L_{\alpha,s} (\Lambda)=1$,
        \item if $\Lambda$ is aligned with the stable or unstable direction and $\Lambda$ contains a periodic point with prim period $p$, then $L_{\alpha,s} (\Lambda)=1-\lambda^{-p}$,
        \item if $\Lambda$ is aligend with the unstable direction and $\Lambda$ has no periodic points but $\{\hat{p_1}+t_1\nu^+, t_1\in\R\}$ contains a periodic point of prime period $p$, then 
        \[  L_{\alpha,s} (\Lambda) = 
   \begin{cases} 
   1 & \text{if $\Lambda\cap T^{-p}\Lambda=\emptyset$,} \\
   1-\lambda^{-p}\frac{|\hat p_2|}{l(\Lambda)} & \text{if $\Lambda\cap T^{-p}\Lambda\ne\emptyset$.} 
  \end{cases}
\]
        \item if $\Lambda$ is aligend with the stable direction and $\Lambda$ has no periodic points but $\{\hat{p_1}+t_1\nu^-, t_1\in\R\}$ contains a periodic point of prime period $p$, then 
        \[  L_{\alpha,s} (\Lambda) = 
   \begin{cases} 
   1 & \text{if $\Lambda\cap T^{-p}\Lambda=\emptyset$,} \\
   1-\lambda^{-p}\frac{|\hat p_2|}{l(\Lambda)} & \text{if $\Lambda\cap T^{-p}\Lambda\ne\emptyset$.} 
  \end{cases}
\]
    \end{enumerate}
\end{theorem}

\begin{proof} 
    To prove 1, first we need to calculate $\theta$. This is done in the proof of \cite[Theorem 8.3]{HYang20} and is equal to one. Now based on Theorem \ref{Theorem:B_r}, Remark \ref{Remark: B_r for Gibbs-Markov} and Remark \ref{remark: thete=theta1 in B_r}, we have $L_{\alpha,s} (\Lambda)=1$.

    For proving case 2 to 6, we can use a similar approach to what is done for 1. That is, calculating $\theta$ using the proof of case 2 to 6 of \cite[Theorem 2.1]{CHNicol}, and then concluding with Theorem \ref{Theorem:B_r}, Remarks \ref{Remark: B_r for Gibbs-Markov} and Remark \ref{remark: thete=theta1 in B_r}. 
\end{proof}

\end{document}